\renewcommand{\P}{{\rm P}}
\newcommand{\E}{\mathbb{E}}
\newcommand{\NN}{\mathbb{N}}
\renewcommand{\P}{\mathbb{P}}
\newcommand{\tr}{^{\textnormal {\tiny  T}}}
\newcommand{\diag}{\mathrm{diag}}
\newcommand{\ic}{\mathrm{\bf{i}}}
\newcommand{\ca}{\mathcal{A}}
\newcommand{\cb}{\mathcal{B}}
\newcommand{\EE}{\mathcal{E}}
\newcommand{\PP}{\mathcal{P}}
\newcommand{\vect}[1]{\boldsymbol #1}
\newcommand{\vpi}{\vect \pi}
\newcommand{\valpha}{\vect \alpha}
\newcommand{\vgamma}{\vect \gamma}
\newcommand{\vlambda}{\vect \lambda}
\newcommand{\vone}{\vect 1}
\newcommand{\vzero}{\vect 0}
\newcommand{\ve}{\vect e}
\newcommand{\vg}{\vect g}
\newcommand{\vh}{\vect h}
\newcommand{\vr}{\vect r}
\newcommand{\vu}{\vect u}
\newcommand{\vv}{\vect v}
\newcommand{\vx}{\vect x}
\newcommand{\vy}{\vect y}
\newcommand{\whE}{\widehat{E}}
\newcommand{\whG}{\widehat{G}}
\newcommand{\whH}{\widehat{H}}
\newcommand{\whM}{\widehat{M}}
\newcommand{\whR}{\widehat{R}}
\newcommand{\wheta}{\widehat{\eta}}
\newcommand{\whchi}{\widehat{\chi}}
\newcommand{\whc}{\widehat{c}}
\newcommand{\whu}{\widehat{\vu}}
\newcommand{\whv}{\widehat{\vv}}
\newcommand{\whpi}{\widehat{\pi}}
\newcommand{\vligne}[1]{\begin{bmatrix} #1 \end{bmatrix}}
\newtheorem{defn}{Definition}[section]
\newtheorem{lem}[defn]{Lemma}
\newtheorem{thm}[defn]{Theorem}
\newtheorem{cor}[defn]{Corollary}
\newtheorem{rem}[defn]{Remark}
\newcommand{\qed}{\hfill $\square$}
\newenvironment{proof}{
      \noindent {\bf Proof }}{\qed
      \vspace{0.25\baselineskip}
}
\newcommand{\debproof}{\begin{proof}}
\newcommand{\finproof}{\end{proof}}
\definecolor{darkmagenta}{rgb}{0.5,0,0.5}
\definecolor{darkgreen}{rgb}{0,0.6,0}
\definecolor{darkblue}{rgb}{0,0,0.6}
\definecolor{darkred}{rgb}{0.8,0,0}
\definecolor{mellow}{rgb}{.847, 0.72, 0.525}
\begin{document}

\title{
Traffic lights, clumping and QBDs}
\author{Steven Finch\thanks{MIT Sloan School of Management, Cambridge,
    MA, USA, \texttt{steven\_finch@harvard.edu} 
}
\and Guy Latouche\thanks{Universit\'e libre de Bruxelles,  Facult\'e des sciences, CP212, Boulevard du Triomphe 2, 1050 Bruxelles,
  Belgium,        \texttt{latouche@ulb.ac.be}} 
\and Guy Louchard\thanks{Universit\'e libre de Bruxelles,  Facult\'e des sciences, CP212, Boulevard du Triomphe 2, 1050 Bruxelles,
  Belgium,        \texttt{louchard@ulb.ac.be}} 
\and Beatrice Meini\thanks{Universita di Pisa, Dipartimento di
  Matematica, 56127 Pisa, Italy,  \texttt{meini@dm.unipi.it}}
}
\date{July 9, 2019}

\maketitle

\begin{abstract}
In discrete time, $\ell$-blocks of red lights are separated by $\ell$-blocks
of green lights. Cars arrive at random. \ We seek the distribution of maximum
line length of idle cars, and justify conjectured probabilistic asymptotics
algebraically for $2\leq\ell\leq3$ and numerically for $\ell\geq4$.
\\
\textbf{Keywords:} Traffic lights, Maximum queue length, Clumping heuristic, Quasi-birth-and-death processes.
\\
\textbf{AMS codes:} 60G50, 05A16, 15B05, 41A60, 60K25, 60K30, 90B20.
\end{abstract}

\section{Introduction}
\label{s:introduction}

Cars arrive at a traffic light according to a Bernoulli process:
during each unit of time, one car arrives or no cars arrive,
respectively with probability $p$ and $q=1-p$; we assume throughout
that $p<q$.  The traffic light alternates
between being red and green during intervals of time of length $\ell$.
When the traffic light is red, arriving cars wait and form a line,;
when the traffic light is green, one car, if any are present, may pass
through during each unit of time.  If the line is empty at some time
$t$ while the light is green, and if a new car arrives, then the
arriving car passes through immediately.

Assorted expressions emerge for this problem (Finch and
Louchard~\cite{FL1-heu,
fl18}).  Let $X_{1}$,
$X_{2}$, \ldots be a sequence of independent random variables
satisfying%
\begin{align*}
& \mathbb{P}\left\{  X_{i}=1\right\}  =p,
& & \mathbb{P}\left\{  X_{i} =0\right\}  =q
& & \text{if }i\equiv1,2,\ldots,\ell\,\operatorname{mod}%
\,2\ell;
\\
& \mathbb{P}\left\{  X_{i}=0\right\}  =p, 
& & \mathbb{P}\left\{  X_{i}%
=-1\right\}  =q 
& & \text{if }i\equiv\ell+1,\ell+2,\ldots,2\ell
\,\operatorname{mod}\,2\ell.
\end{align*}
Define $S_{j}=\max\left\{
S_{j-1}+X_{j},0\right\}  $ for all $j \geq 1$, with $S_0$ a
non-negative integer.  The
quantity 
\[
M_{T}=\max\nolimits_{0\leq j\leq T}S_{j}
\] 
is the worst-case traffic
congestion, as opposed to the average-case often cited. 

Only the
circumstance when $\ell=1$ is amenable to rigorous treatment, as far as is
known. 
The Poisson clumping heuristic (Aldous~\cite{Ald-heu}), while not a theorem, gives
results identical to exact asymptotic expressions when such exist, and
evidently provides excellent predictions otherwise. Consider an irreducible
positive recurrent Markov chain with stationary distribution $\pi$.
{
Let $T_k$ be the time necessary to reach level $k$; 
the clumping heuristic states that 
\begin{equation}
   \label{e:MTa}
\P\left[ T_k > T \right] = 
\mathbb{P}\left[  M_{T}<k\right]  \sim \exp\left(  -\frac{\pi_{k}}%
{\mathbb{E}(C_k)} T \right)
\end{equation}
asymptotically as $k \rightarrow \infty$ and $T =\Theta(\E(C_k) /\pi_k)$,
where $C_k$ is the sojourn time in $k$ during a so-called clump of
nearby visits to~$k$
 --- the argument behind the heuristic is
briefly described in Section~\ref{s:sojourn}.
}

The traffic light process is a two-dimensional discrete-time Markov
chain $\{X(t)\} = \{S_t, \varphi_t\}_{t = 0, 1, 2, \ldots}$ where $S_t \in \NN$
is the length of the line at time $t$ and
$\varphi_t \in \{1, \ldots, 2 \ell\}$ is an indicator of the state of
the traffic light: it is red for $1 \leq \varphi_t \leq \ell$ and
green for $\ell + 1 \leq \varphi_t \leq 2\ell$.  We order the states
in lexicographic order {and decompose the state space $\EE = \{(n,i): n
\geq 0, 1 \leq i \leq 2 \ell\}$ into subsets of constant values of
$n$: $\EE = \cup_{n \geq 0} \EE_n$, with $\EE_n = \{(n,1), \ldots (n,
2\ell)\}$.  We organise the transition matrix $\PP_\ell$ in a
conformant manner, so that it}
takes the block-structure of a Quasi-Birth-and-Death (QBD) process,
that is,
\begin{equation}
   \label{e:Porig}
\PP_\ell  = \vligne{\cb & \ca_1 \\ \ca_{-1} & \ca_0 & \ca_1 \\  & \ca_{-1} & \ca_0 &
                                                                 \ddots
  \\  & & \ddots & \ddots}
\end{equation}
{where $\cb$, $\ca_{-1}$, $\ca_0$, and $\ca_1$ collect the
  transition probabilities from $\EE_0$ to $\EE_0$, and from $\EE_n$
  to $\EE_{n-1}$, $\EE_n$ and $\EE_{n+1}$, for $n \geq 1$, respectively.
}

{In detailed notation:
\begin{align*}
(\ca_k)_{ij} & = \PP[X_{t+1} = (n+k,j) | X_t = (n,i)] \qquad \mbox{for
               $n\geq 1$,}
  \\
(\cb)_{ij} & = \PP[X_{t+1} = (0,j) | X_t = (0,i)].
\end{align*}
}
{Clearly, $\cb$, $\ca_{-1}$, $\ca_0$, and $\ca_1$ are
matrices of size $2\ell$, and it is easy to verify that
they are as follows (the unspecified entries are equal to 0)
}
\begin{align*}
\ca_{-1} & =
\left[
  \begin{array}{cccc|cccc}
0 & 0 & & & \\
& \ddots & \ddots & & & \\
& & 0 & 0 \\
& & & 0 & 0 \\
\hline
& & & & 0 & q \\
& & & & & \ddots & \ddots \\
& & & & & & 0 & q \\
q & & & & & & & 0
  \end{array}
\right],
&
\ca_{0} & =
\left[
  \begin{array}{cccc|cccc}
0 & q & & & \\
& \ddots & \ddots & & & \\
& & 0 & q \\
& & & 0 & q \\
\hline
& & & & 0 & p \\
& & & & & \ddots & \ddots \\
& & & & & & 0 & p \\
p & & & & & & & 0
  \end{array}
\right]
\end{align*}
\begin{align*}
\ca_{1} & =
\left[
  \begin{array}{cccc|cccc}
0 & p & & & \\
& \ddots & \ddots & & & \\
& & 0 & p \\
& & & 0 & p \\
\hline
& & & & 0 & 0 \\
& & & & & \ddots & \ddots \\
& & & & & & 0 & 0 \\
0 & & & & & & & 0
  \end{array}
\right],
& 
\mbox{and} \quad
\cb &
= \ca_{-1} + \ca_0.
\end{align*}

We use this representation in Section \ref{s:natural} but we start by
using an approximate representation, better suited for an exact
analysis for small values of $\ell$, as shown in Appendix~\ref{s:exact2} for $\ell =
2$ and in \cite{FL1-heu, fl18} for $\ell =  2$ and 3.  
We parse the sequence
$\{S_0, S_1, \ldots\}$ over intervals of length
$2\ell$ corresponding to  cycles of $\ell$ red and
$\ell$ green units of time and we define the Markov chain $\{Z_t\}$
with $Z_t = S_{2 \ell t}$.  For such a sub-walk, we need not keep
track of $\varphi_t$; it is enough to decide on the value of
$\varphi_0$.  The transition probabilities are then determined by the
proper product of the transition matrices of infinite size
\[
U = \vligne{
q & p & \cdot & \cdot & \cdot & \\
\cdot & q & p & \cdot & \cdot & \\
\cdot & \cdot & q & p & \cdot & \\
\cdot & \cdot & \cdot & q & p & \\
\cdot & \cdot & \cdot & \cdot & q & \ddots \\
& & & & & \ddots
}
\qquad \mbox{and} \qquad
V = \vligne{
1 & \cdot & \cdot & \cdot & \cdot & \\
q & p & \cdot & \cdot & \cdot & \\
\cdot & q & p & \cdot & \cdot & \\
\cdot & \cdot & q & p & \cdot & \\
\cdot & \cdot & \cdot & q& p & \ddots \\
& & & & & \ddots
},
\]
where $U$ is the one-step transition probability matrix when the
traffic light is red and $V$  is the matrix when the light is green.

{For the first sub-walk, we} assume that $\varphi_0 =1$, and so the transition matrix of
$\{Z_t\}$ is $P_\ell = U^\ell V^\ell$.  {We briefly consider in
  Appendix~\ref{s:exact2} the sub-walk with $\varphi_0 = \ell +1$ and transition
  matrix $V^\ell U^\ell$.}
 In detailed expression, $(P_\ell)_{n,n+j} = p_j$ for
$n+j \geq 1$ and $(P_\ell)_{n,0} = p_{-n} + \cdots +
p_{-\ell}$  for $n \leq \ell$, with 
\begin{equation}
   \label{e:pj}
p_j = {2 \ell \choose \ell+j} p^{\ell+j} q^{\ell - j}, \qquad
-\ell \leq j \leq \ell
\end{equation}
being the probability that the line in front of the street light increases (if $j >0$) or
decreases (if $j<0$) by $|j|$ cars during any cycle of length $2 \ell$,

For fixed $\ell\geq1$, we have the following conjecture
\cite{FL1-heu}:%
\begin{align}
   \label{e:MT}
& \mathbb{P}\left[  M_{T}\leq\log_{q^{2}/p^{2}}(T)+h\right]  \sim\exp\left[
-\frac{\chi_{\ell}(p)}{2\ell}\left(  \frac{q^{2}}{p^{2}}\right)  ^{-h}\right],
\end{align}
\[
 \mathbb{E}\left(  M_{T}\right)  \sim\dfrac{\ln(T)}{\ln\left(  \frac{q^{2}%
}{p^{2}}\right)  }+\dfrac{\gamma+\ln\left(  \frac{\chi_{\ell}(p)}{2\ell
}\right)  }{\ln\left(  \frac{q^{2}}{p^{2}}\right)  }+\dfrac{1}{2}%
+\varphi_{\ell}(T),
\]
\[ \mathbb{V}\left(  M_{T}\right)  \sim\dfrac{\pi^{2}%
}{6}\dfrac{1}{\ln\left(  \frac{q^{2}}{p^{2}}\right)  ^{2}}+\dfrac{1}{12}%
+\psi_{\ell}(T)
\]
as $T\rightarrow\infty$. The symbol $\gamma$ denotes Euler's constant
\cite{Fn-heu}; $\varphi_{\ell}$ and $\psi_{\ell}$ are periodic functions of
$\log_{q^{2}/p^{2}}(T)$ with period $1$ and of small amplitude; also%
\begin{align*}
\chi_{1}(p) & =\frac{p(q-p)^{2}}{q^{3}},
\\
\chi_{2}(p) & =\frac{\left[  1+(q-p)\theta\right]
  ^{2}(q-p)^{2}}{8q^{6}},
\\
\chi_{3}(p) & =\frac{\left[  u+(q-p)^{2}\theta+\sqrt{2}(q-p)\sqrt{v+u\theta
}\right]  ^{2}(q-p)^{2}}{48pq^{9}}%
\end{align*}
where%
\[%
\begin{array}
[c]{ccc}%
u=1-2p+6p^{2}-8p^{3}+4p^{4}, &  & v=1+6p^{2}-28p^{3}+54p^{4}-48p^{5}+16p^{6},
\end{array}
\]%
\[
\theta=\sqrt{1+4pq+16p^{2}q^{2}}.
\]
We give a brief justification of the case $\ell=2$ in the Appendix; elaborate
supporting algebraic details for $2\leq\ell\leq3$ are found in
\cite{fl18}
--- this is  actually  related to the Gumbel distribution function given
by {$\exp(-e^{-x})$}. Many applications are given in
Louchard and Prodinger \cite{momP}.

A numerical approach is necessary for\ $\ell\geq4$.  We readily see
that, like $\PP_\ell$, the transition matrix $P_\ell$ has a QBD
structure: if we group $\ell$ by $\ell$ the rows in $P_\ell$, we have 
\begin{equation}
   \label{e:Pl}
P_\ell  = \vligne{B & A_1 \\ A_{-1} & A_0 & A_1 \\  & A_{-1} & A_0 &
                                                                 \ddots
  \\  & & \ddots & \ddots}
\end{equation}
with blocks $A_{-1}$, $A_0$, $A_1$ and $B$ of size $\ell$ given by
\begin{align*}
 & \left[\begin{array}{c|c|c}
    A_{-1} & A_0 & A_1
  \end{array}
\right]
 = \\
 & \quad
\left[
\begin{array}{cccc|cccc|cccc}
  p_{-\ell} &   p_{-\ell+1} & \cdots  &  p_{-1} & p_0 & p_1 & \cdots  &  p_{\ell-1} & p_\ell \\
  & p_{-\ell} &   &   &  p_{-1} & p_0 & p_1 &   &  p_{\ell-1} & p_\ell \\
 & & \ddots &\ddots & \vdots & \ddots & \ddots & \ddots & \vdots & \ddots & \ddots\\
  &  &  &  p_{-\ell} &  p_{-\ell+1} & \ldots  &  p_{-1} & p_0 & p_1 & \ldots  &
 p_{\ell-1} & p_\ell 
\end{array}
\right]
\end{align*}
and
\begin{equation}
   \label{e:B}
B = A_0 + A_{-1} \vone \cdot \ve_1\tr
\end{equation}
where $\vone$ and $\ve_1$ are vectors of size $\ell$, all components
of $\vone$ are equal to 1 and $\ve_1\tr = \vligne{1 & 0 & \cdots & 0}$.
This means that $B$ is obtained by adding to the first column of
$A_0$ the probability mass on each row of $A_{-1}$.

This allows us to use basic features of QBDs, as presented in Latouche
and Ramaswami~\cite{lr99}, an early reference being
Neuts~\cite{neuts81}.  The theory is well established, and efficient
numerical procedures are readily available.  The paper is organised as
follows.  We give in the next section some background properties of
QBDs and we analyse the stationary distribution of the Markov chain
$\{Z_t\}$; in particular, we determine the decay rate of its
stationary distribution.  In Section \ref{s:sojourn}, we analyse the
expected sojourn times in clumps and obtain simple expressions for the
asymptotics of $\pi_k$ and $\mathbb{E}(C_k)$ in (\ref{e:MTa}).  We
discuss in Section \ref{s:natural} the effect of parsing the sequence
$\{S_t\}$ at epochs which are multiples of $2\ell$, and of working with
$\{Z_t\}$, instead of using $\{S_t\}$ itself.

%%++++++++++++++++++++

\section{Stationary distribution of $P_\ell$}
\label{s:stationary}

We denote by $\vpi$ the stationary probability vector of $P_\ell$:
$\vpi\tr = \vpi\tr P_\ell$, $\vpi\tr\vone = 1$ and we partition $\vpi$
in a manner conformant with $P_\ell$, writing
\[
\vpi\tr = \vligne{\vpi_0\tr & \vpi_1\tr & \vpi_2\tr & \ldots}
\]
with 
\[
\vpi_k\tr = \vligne{\pi_{\ell k} &  \pi_{\ell k +1} & \ldots &
  \pi_{\ell (k+1)-1}}   \qquad \mbox{ for $k=0, 1, \ldots$}
\]
As $p<q$, the Markov chain is positive recurrent and 
\begin{equation}
   \label{e:pioR}
\vpi_k\tr = \vpi_0\tr R^k, \qquad k \geq 0,
\end{equation}
where $R$ is the unique non-negative matrix with
  eigenvalues in the open unit disk, solution of
\[
 R^2 A_{-1} + R (A_0-I)  +  A_1  = 0
\]
(\cite[Chapter 3]{neuts81}, \cite[Chapter 6]{lr99}).
In expanded form, (\ref{e:pioR}) may be written as
\begin{equation}
   \label{e:pioRbis}
\vligne{\pi_{\ell k} &  \pi_{\ell k +1}, \ldots ,
  \pi_{\ell (k+1)-1}} = \vligne{\pi_0 & \pi_1 & \ldots \pi_{\ell-1}}
R^k, \qquad k \geq 0.
\end{equation}
The boundary vector $\vpi_0$  is the unique solution of the
linear system
\begin{align}
   \label{e:pia}
\vpi_{0}\tr (B+ R A_{-1} -I) & = \vzero  \\
   \label{e:pib}
\vpi_{0}\tr  (I-R)^{-1} \vone & = 1.
\end{align}

Another important matrix is denoted as $G$ and is the unique stochastic solution
  of the equation
\begin{equation}
   \label{e:G}
 A_{-1} + (A_0-I)  G +  A_1  G^2 = 0.
\end{equation}
Its physical meaning is that 
$G_{ij}$, $1 \leq i,j \leq \ell$, is the
probability that, starting from state $n+i$, the Markov chain
visits $n-\ell +j$ before any other state with index $s \leq n$,
independently of $n$.

There exist efficient and numerically stable algorithms to compute $R$
in general (Bini
{\it et al.} \cite{blm05} and \cite{bmsv06}) and for all practical
purposes,  $R$ may be considered to be known, once $A_{-1}$, $A_0$ and $A_1$
are given.  In these particular circumstances, however, we may use the
Toeplitz structure of $P_\ell$ and obtain an explicit representation
of both matrices.  This is shown for $R$ in the next section and in
Section~\ref{s:sojourn} for $G$.

%====================================
\section{Toeplitz structure}
   \label{s:toeplitz}

   Our analysis of $R$ and $G$ is based primarily on Bini {\it et
     al.}~\cite[Chapter 5]{blm05}, which deals with M/G/1-type Markov
   chains with limited displacements such as $P_\ell$ in
   (\ref{e:Pl}). This approach leads us to determine explicitly the
   eigenvalues and eigenvectors of these two matrices.  We need, in
   this section and the next, to define the random walk on
   $(-\infty, +\infty)$ with jump size distribution
   $\{p_n: -\ell \leq n \leq \ell\}$; it is a Markov chain with
   transition matrix
\begin{equation}
   \label{e:ptilde}
\widetilde P_\ell = 
\vligne{\ddots & \ddots \\ 
\ddots  & A_0 & A_1 \\
& A_{-1} & A_0 & A_1 \\ & & A_{-1} & A_0  & \ddots \\
  &  &  & \ddots  & \ddots
}.
\end{equation}

The next lemma is stated without proof, details are in \cite[Section
4.4]{blm05} and \cite[Chapter 9]{lr99}.
\begin{lem}
   \label{t:roots}
The  roots of the polynomial  $\xi(z)= \det \Xi(z)$, with
\[
\Xi(z) = A_{-1} + z (A_0-I) + z^2 A_1,
\] 
are
\[
|z_1| \leq |z_2| \leq \cdots \leq |z_{\ell -1}| < z_\ell = 1 <
z_{\ell+1} < 
|z_{\ell+2}| \leq \cdots \leq |z_{2\ell}|.
\]
The roots $z_1$ to $z_\ell$ are the eigenvalues of $G$.
The roots $z_{\ell+1}$, $z_{\ell+2}$, \ldots, $z_{2\ell}$ are the reciprocals of the
eigenvalues of $R$
 and $\eta$, the
Perron-Frobenius eigenvalue of  $R$, is equal to $1/z_{\ell+1}$.   
\qed
\end{lem}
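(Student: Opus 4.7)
The plan is to extract the roots of $\xi$ by translating the spectra of $G$ and $R$ via the matrix quadratic equations they satisfy, and then to separate these contributions by modulus using the stochasticity of $G$ and the positive recurrence of $P_\ell$. I would first settle the degree and non-vanishing at the origin: the Toeplitz form of the blocks makes $A_1$ lower triangular with diagonal entries $p_\ell=p^{2\ell}>0$ and $A_{-1}$ upper triangular with diagonal entries $p_{-\ell}=q^{2\ell}>0$, so $\det A_1\neq 0$ and $\det A_{-1}\neq 0$. Hence $\xi(z)$ has degree exactly $2\ell$ and does not vanish at $z=0$, giving $2\ell$ non-zero roots counted with multiplicity.

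Next I would connect roots to the two matrix spectra. If $G\vv=\lambda\vv$, then right-multiplying (\ref{e:G}) by $\vv$ yields $\Xi(\lambda)\vv=\vzero$, so each eigenvalue of $G$ is a root of $\xi$. Dually, if $\vu\tr R=\mu\vu\tr$ with $\mu\neq 0$, left-multiplying the defining equation $R^2A_{-1}+R(A_0-I)+A_1=0$ by $\vu\tr$ and factoring $\mu^2$ gives $\vu\tr\Xi(\mu^{-1})=\vzero$, so $\mu^{-1}$ is a root. To separate the two contributions by modulus, $G$ is stochastic and so has spectral radius $1$, while the drift $\sum_j j p_j=-\ell(q-p)<0$ (a Binomial$(2\ell,p)$ mean calculation) forces the Perron--Frobenius eigenvalue $\eta$ of $R$ to satisfy $\eta<1$. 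Thus the $\ell$ eigenvalues of $G$ lie in the closed unit disk, the $\ell$ reciprocals of eigenvalues of $R$ lie in $\{|z|>1\}$, and together they exhaust all $2\ell$ roots of $\xi$.

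It remains to identify the distinguished roots. The row-sum identity $(A_{-1}+A_0+A_1)\vone=\vone$ gives $\Xi(1)\vone=\vzero$, so $z=1$ is a root; it is the Perron--Frobenius eigenvalue of the stochastic matrix $G$ and corresponds to $z_\ell$. The strict inequalities $|z_{\ell-1}|<1<z_{\ell+1}<|z_{\ell+2}|$ then follow from Perron--Frobenius applied separately to $G$ and to $R$. Finally, $\eta$ being the largest-modulus eigenvalue of $R$ makes $1/\eta$ the smallest-modulus root of $\xi$ in $\{|z|>1\}$, which by the ordering is precisely $z_{\ell+1}$.

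The main obstacle is justifying the strict separations, in particular that $G$ has no other eigenvalue on the unit circle and that $\eta$ is simple and real. These rely on irreducibility and aperiodicity, which here can be established either by invoking the general M/G/1-type machinery of Bini {\it et al.}~\cite{blm05}, or directly from the fact that the step distribution $\{p_j\}$ has full support on $\{-\ell,\ldots,\ell\}$ (since $p_{-\ell}, p_\ell>0$), so that the random walk (\ref{e:ptilde}) is irreducible and aperiodic, a property inherited by $G$ and $R$.
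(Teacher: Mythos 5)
The paper does not actually prove this lemma: it is ``stated without proof'' with a pointer to \cite[Section 4.4]{blm05} and to Chapter~9 of Latouche--Ramaswami, so there is no in-paper argument to compare against. Your sketch follows exactly the standard route underlying those references, and almost all of its steps are sound: the triangular structure of $A_{\pm 1}$ does give $\det A_1=p^{2\ell^2}\neq 0$ and $\det A_{-1}=q^{2\ell^2}\neq 0$, hence $\deg\xi=2\ell$ and $\xi(0)\neq 0$; the eigenvector transfers $G\vv=\lambda\vv\Rightarrow\Xi(\lambda)\vv=\vzero$ and $\vu\tr R=\mu\vu\tr\Rightarrow\vu\tr\Xi(\mu^{-1})=\vzero$ are correct (and $\mu\neq 0$ for every eigenvalue of $R$ follows from $\det A_1\neq 0$, a point you should state since you need all $\ell$ reciprocals to exist); the drift computation $\sum_j jp_j=2\ell p-\ell=-\ell(q-p)<0$ is right; $\Xi(1)\vone=\vzero$ identifies $z_\ell=1$; and the strict separations do come from Perron--Frobenius once one checks that $G>0$ and $R>0$ entrywise, which holds here because every $p_j$, $-\ell\le j\le\ell$, is strictly positive.

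The one genuine gap is the exhaustion step, ``together they exhaust all $2\ell$ roots of $\xi$.'' Your eigenvector argument only shows that the \emph{spectrum} of $G$ and the reciprocals of the \emph{spectrum} of $R$ are contained in the root set; it says nothing about algebraic multiplicities. If, say, $G$ had an eigenvalue of algebraic multiplicity $2$ and geometric multiplicity $1$, your count of ``$\ell$ roots inside, $\ell$ outside'' would not follow. The clean fix is the (Wiener--Hopf type) factorization $\Xi(z)=-(zA_1+\widehat A_0)(zI-G)$ with $\widehat A_0=A_0-I+A_1G$, which gives $\xi(z)=\pm\det(zA_1+\widehat A_0)\cdot\det(zI-G)$ and hence accounts for all multiplicities, the second factor contributing the eigenvalues of $G$ and the first the reciprocals of the eigenvalues of $R$; this is precisely what the cited machinery of \cite{blm05} provides. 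Alternatively, in this specific problem one can bypass the issue by first proving Lemma~\ref{t:rootsq}, which shows that the $2\ell$ roots are distinct, after which your inclusion-plus-counting argument closes without further work; but as written, at the point where Lemma~\ref{t:roots} is invoked that information is not yet available.
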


\begin{rem} \em
   \label{r:eta}
   The polynomial $\xi(z)$ has degree $2\ell$ since the matrix $A_1$
   is nonsingular. Moreover, the transition matrix $\widetilde P$ is
   irreducible, and so the quantity $\kappa$ defined in \cite[Eqn
   (4.26)]{blm05} is equal to 1.  Therefore, it results from
   \cite[Theorem 4.9]{blm05} that 
the polynomial $\xi(z)$ does not have roots of modulus $z_\ell$ and $z_{\ell+1}$, different from $z_\ell$ and $z_{\ell+1}$, respectively.

In consequence, $\eta$ has
  multiplicity 1, all other  eigenvalues of $R$ have a modulus
  strictly less than  $\eta$, and  it
immediately results from (\ref{e:pioR}) that
\begin{equation}
   \label{e:piklim}
\vpi_k\tr = (\vpi_0 \tr \vv) \vu\tr \eta^k  + o(\eta^k)
\qquad \mbox{asymptotically as $k \rightarrow \infty$},
\end{equation}
where $\vu$ and $\vv$ are respectively the left- and right-eigenvector
of $R$ for the eigenvalue $\eta$, normalised so that
$\vu\tr \vv = 1$.
\end{rem}

% \remarque{From Guy: I have changed the definition of $\Xi(z)$ in order to
% make a more direct connection to the results in \cite{blm05}.}

To begin with, we define 
\[
p(z)= \sum_{-\ell \leq i \leq \ell}   p_i z^i,
\]
with the intention of using \cite[Theorem 5.17]{blm05}: if $\zeta$ is
a root of $z^\ell - z^\ell  p(z)$, then $\zeta^\ell$ is a root of
$\xi(z)$.

\begin{lem}
   \label{t:pofz}
The function $p(z)$ is given by $p(z)= (p+q z^{-1})^\ell (q + pz)^\ell$.
\end{lem}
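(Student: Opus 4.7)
The plan is to compute $p(z)$ directly from the explicit formula (\ref{e:pj}) for $p_j$, reducing everything to a single application of the binomial theorem. Substituting the definition and then changing the summation index $k = \ell+j$ (so that $k$ runs over $0, 1, \ldots, 2\ell$) gives
\begin{equation*}
p(z) \;=\; \sum_{j=-\ell}^{\ell} \binom{2\ell}{\ell+j} p^{\ell+j} q^{\ell-j} z^{j}
\;=\; z^{-\ell}\sum_{k=0}^{2\ell}\binom{2\ell}{k}(pz)^{k} q^{2\ell-k}
\;=\; z^{-\ell}(q+pz)^{2\ell}.
\end{equation*}
Splitting $(q+pz)^{2\ell}=(q+pz)^{\ell}(q+pz)^{\ell}$ and absorbing $z^{-\ell}$ into one of the factors gives $z^{-\ell}(q+pz)^{\ell}=(p+qz^{-1})^{\ell}$, which yields the claimed factorisation.

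A more illuminating way to see the same identity — and the route I would flag in the write-up — is the probabilistic interpretation alluded to around (\ref{e:pj}). The quantity $p_j$ is the probability that a sum of $2\ell$ independent steps of the sequence $\{X_i\}$ defined in the Introduction equals $j$: during the $\ell$ red epochs each step contributes $+1$ with probability $p$ and $0$ with probability $q$, so its probability generating function is $q+pz$; during the $\ell$ green epochs each step contributes $0$ with probability $p$ and $-1$ with probability $q$, so its probability generating function is $p+qz^{-1}$. Independence over the $2\ell$ steps of a cycle makes the generating function of the net displacement the product $(q+pz)^{\ell}(p+qz^{-1})^{\ell}$, which by definition is $p(z)$.

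There is no real obstacle: the statement is essentially a clean repackaging of the binomial theorem together with the factorisation of a cycle into its red and green halves. The only point to be careful about is the bookkeeping of the shift by $\ell$, which is handled by pulling out the factor $z^{-\ell}$ as above.
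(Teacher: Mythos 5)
Your proof is correct, but it takes a genuinely different route from the paper. The paper's argument is structural: it invokes the isomorphism between semi-infinite Toeplitz matrices and Laurent power series (citing \cite[Section~3.1]{blm05}), under which the Toeplitz part of $U$ maps to $u(z)=q+pz$, the Toeplitz part of $V$ maps to $v(z)=p+qz^{-1}$, and matrix products map to products of symbols, so $P_\ell=U^\ell V^\ell$ has symbol $u(z)^\ell v(z)^\ell$. Your first argument bypasses that machinery entirely: you plug the explicit binomial expression (\ref{e:pj}) for $p_j$ into the definition of $p(z)$ and apply the binomial theorem, which is more elementary and self-contained, at the cost of not exhibiting \emph{why} the factorisation into an $\ell$-fold red part and an $\ell$-fold green part must occur. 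Your second, probabilistic argument --- the symbol of the cycle is the PGF of the net displacement, which factors over the $2\ell$ independent steps --- is in fact the intuitive content behind the paper's Toeplitz isomorphism argument, just phrased in terms of generating functions of i.i.d.\ increments rather than in terms of matrix symbols; it is arguably the cleanest of the three. All three arguments are valid, and the arithmetic in your change of index $k=\ell+j$ and in the identity $z^{-\ell}(q+pz)^\ell=(p+qz^{-1})^\ell$ is correct.
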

\begin{proof}
  By using the isomorphism between semi-infinite Toeplitz matrices and
  Laurent power series (see \cite[Section 3.1]{blm05}), the Toeplitz
  part of the matrix $U$ is represented by $u(z)= q+pz$ while the
  Toeplitz part of $V$ is represented by $v(z)= p+q z^{-1}$, and so
  the representation $p(z)$ of the Toeplitz part of
  $P_\ell = U^\ell V^\ell$ is given by $u(z)^\ell v(z)^\ell$.
\end{proof}

\begin{lem}
   \label{t:rootsq}
Define $\omega_\ell = \cos(2\pi/\ell) + \ic \sin(2\pi/\ell)$, where
$\ic$ is a complex square root of $-1$.  The roots of the
polynomial $h(z) = z^\ell - z^\ell  p(z)$ are the roots of the
quadratic polynomials $s_j(z) = (pz+q)^2 - \omega_\ell^{j-1} z$, for
$1 \leq j \leq \ell$.  

Denoting by $\sigma_j$ and $\mu_j$
the two roots of $s_j(z)$, with $|\sigma_j| \leq |\mu_j|$, we have
$|\sigma_j| \leq 1 < |\mu_j|$.  In particular, $\sigma_1 = 1$ and
$\mu_1 = (q/p)^2$.

The eigenvalues of $G$ are $\{\sigma_j^\ell: 1\leq j \leq \ell\}$; the
eigenvalues of $R$ are the reciprocals of $\{\mu_j^\ell: 1\leq j \leq \ell\}$.
\end{lem}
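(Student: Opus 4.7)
My plan is to pass from the scalar polynomial $h(z)$ to the quadratics $s_j(z)$ via the algebraic identity of Lemma~\ref{t:pofz}, then to bound the moduli of the roots of each $s_j$ by a modulus-equation argument, and finally to invoke the correspondence $\zeta \mapsto \zeta^\ell$ between the roots of $h$ and those of $\xi$ recalled from \cite[Thm~5.17]{blm05}.

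First, Lemma~\ref{t:pofz} gives $z^\ell p(z) = [z(p+qz^{-1})]^\ell (q+pz)^\ell = (pz+q)^{2\ell}$, so $h(z) = z^\ell - (pz+q)^{2\ell}$; in particular $h(0) = -q^{2\ell} \neq 0$, so every root of $h$ is nonzero. Then $h(\zeta)=0$ reads $[(p\zeta+q)^2/\zeta]^\ell = 1$, equivalently $(p\zeta+q)^2 = \omega_\ell^{j-1}\zeta$ for some $j\in\{1,\ldots,\ell\}$, i.e., $s_j(\zeta)=0$. Since $h$ and $\prod_{j=1}^\ell s_j$ both have degree $2\ell$, their root-sets coincide. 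For $j=1$, $s_1(z) = p^2 z^2 - (p^2+q^2) z + q^2$ vanishes at $z=1$ and has root-product $q^2/p^2$, giving $\sigma_1=1$ and $\mu_1=q^2/p^2$.

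The main step, and the one I expect to be the obstacle, is showing $|\sigma_j| \leq 1 < |\mu_j|$ for every $j$. Taking moduli in $(p\zeta+q)^2 = \omega_\ell^{j-1}\zeta$ and writing $\zeta = re^{\ic\theta}$ yields
\[
f_\theta(r) := p^2 r^2 + (2pq\cos\theta - 1)r + q^2 = 0 .
\]
A one-line computation shows $f_\theta(1) = 2pq(\cos\theta-1) \leq 0$, with equality only at $\theta=0$; since $f_\theta$ has positive leading coefficient and $f_\theta(0) = q^2 > 0$, it possesses two positive real roots $r_-(\theta) \leq 1 \leq r_+(\theta)$ whose product equals $q^2/p^2$, so in fact $r_+(\theta) \geq q^2/p^2 > 1$. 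The moduli $|\sigma_j|, |\mu_j|$ are each one of $r_\pm(\cdot)$ for their respective arguments, and by Vieta applied to $s_j$ their product equals $q^2/p^2$; if both were $\geq 1$, each would be an $r_+$ value at least $q^2/p^2$, forcing a product of at least $(q^2/p^2)^2 > q^2/p^2$, a contradiction. Thus $|\sigma_j| \leq 1 < |\mu_j|$, and $|\sigma_j|=1$ forces $\cos\theta = 1$, whence $\sigma_j = 1$, which occurs only when $\omega_\ell^{j-1}=1$, i.e.\ $j=1$. This is where the hypothesis $p<q$ enters, through the strict inequality $q^2/p^2 > 1$.

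The closing step is bookkeeping: the map $\zeta \mapsto \zeta^\ell$ of \cite[Thm~5.17]{blm05} sends the $2\ell$ roots of $h$ bijectively to the $2\ell$ roots of $\xi$; the split $|\sigma_j^\ell| \leq 1 < |\mu_j^\ell|$ matches the split of the roots of $\xi$ given in Lemma~\ref{t:roots}, so $\{\sigma_j^\ell : 1 \leq j \leq \ell\}$ is the set of eigenvalues of $G$ and $\{\mu_j^{-\ell} : 1 \leq j \leq \ell\}$ is the set of eigenvalues of $R$.
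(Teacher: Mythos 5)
Your proposal is correct, and for the key step it takes a genuinely different route from the paper. The opening reduction ($h(z)=z^\ell-(q+pz)^{2\ell}$, hence the roots of $h$ are exactly the roots of the quadratics $s_j$) and the closing identification of the spectra of $G$ and $R$ via \cite[Theorem 5.17]{blm05} together with Lemma~\ref{t:roots} are the same as in the paper. Where you differ is the modulus estimate: the paper applies Rouch\'e's theorem to the perturbed polynomial $(q-\epsilon+pz)^2-\omega_\ell^{j-1}z$ on the unit circle, lets $\epsilon\to0$ to get the non-strict inequalities $|\sigma_j|\leq 1\leq|\mu_j|$, and then invokes the Vieta product $\sigma_j\mu_j=(q/p)^2>1$ to make the second inequality strict. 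You instead take moduli in $(p\zeta+q)^2=\omega_\ell^{j-1}\zeta$ and note that $|\zeta|$ must be a root of the real quadratic $f_\theta(r)=p^2r^2+(2pq\cos\theta-1)r+q^2$, with $f_\theta(0)=q^2>0\geq f_\theta(1)$. This is entirely elementary (no complex analysis, no limiting argument) and yields strictly more than the paper's proof: the quantitative lower bound $|\mu_j|\geq (q/p)^2$ for every $j$, and the fact that $|\sigma_j|=1$ forces $j=1$, which is consistent with the spacing of the roots of $\xi$ in Lemma~\ref{t:roots} and Remark~\ref{r:eta}. One small phrasing repair: in the contradiction step you should suppose both moduli are strictly greater than $1$ (which is what negating $|\sigma_j|\leq1$ gives), since a modulus exactly equal to $1$ could be the value $r_-(\theta)=1$ rather than an $r_+$ value; with ``$>1$'' in place of ``$\geq1$'' the dichotomy ``a root of $f_\theta$ exceeding $1$ must be $r_+(\theta)\geq (q/p)^2$'' applies cleanly and the argument is airtight. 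Both proofs ultimately use $\sigma_j\mu_j=(q/p)^2$ and the hypothesis $p<q$ in the same place.
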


\begin{proof}
By Lemma \ref{t:pofz}, we have 
\begin{equation}
   \label{e:hofz}
h(z) = z^\ell -(q+pz)^{2 \ell},
\end{equation}
and
so the roots of $h(z)$ are such that $((q+pz)^2/z)^\ell=1$, or
$((q+pz)^2/z)=\omega_\ell^{j-1}$ for some $j$.  This proves the first
statement.

Take $\epsilon \in (0,1)$ and $f_{1,\epsilon}(z)= (q-\epsilon + pz)^2$
and define $f_2(z)= \omega_\ell^{j-1} z$.  By Rouch{\'e}'s theorem, as
$|f_{1, \epsilon}(z)| \leq (q+p-\epsilon)^2 < |f_2(z)| = 1$ for
$|z|=1$, the polynomial
$s_{j,\epsilon}(z) = f_{1, \epsilon}(z) - f_2(z)$ has one root inside
and one root outside the unit circle.  Taking the limit as
$\epsilon \rightarrow 0$, we obtain $|\sigma_j| \leq 1 \leq |\mu_j|$.
Since $\sigma_j . \mu_j = (q/p)^2 >1$, we conclude that $|\mu_j|> 1$
and this proves the second statement.  It is easily verified that
$\sigma_1 = 1$ and $\mu_1 = (q/p)^2$.

The last statement is a consequence of \cite[Theorem 5.17]{blm05}, as
we mentioned immediately before Lemma \ref{t:pofz}.
\end{proof}

Given a vector $\vx = \vligne{x_0 & \ldots & x_{\ell-1}}\tr$, one
defines the companion matrix associated to $\vx$ as
\[
C(\vx) = \vligne{
0 & 1 & 0 & \ldots & 0 \\
\vdots & \ddots & \ddots & \ddots & \vdots \\
\vdots &  & \ddots & \ddots & 0 \\
0 & \ldots & \ldots & 0 & 1 \\
x_0 & x_1 & \ldots & \ldots & x_{\ell-1}
};
\]
it is non-singular if and only if $x_0 \not= 0$ and its eigenvalues are the roots
of the polynomial $z^\ell - \sum_{0 \leq i \leq \ell-1} x_i z^i$.  

The following theorem is given without proof: it results from Lemma
\ref{t:rootsq} and from \cite[Section 5.5]{blm05}.

\begin{lem}
   \label{t:R}
Define the polynomial 
\begin{equation}
   \label{e:rofz}
r(z)= z^\ell - \sum_{0 \leq i \leq \ell-1} r_i
z^i = \prod_{1 \leq i \leq \ell} (z-\mu_i^{-1})
\end{equation}
and the vector $\vr = \vligne{r_0 & \ldots & r_{\ell-1}}\tr$.  The
matrix $R$ is given by 
\begin{equation}
   \label{e:rcr}
R= (C(\vr)^\ell)\tr,
\end{equation}
it is nonsingular since all $\mu_i^{-1}$s are different from 0 and
it has the LU-type decomposition $R=L_R U_R ^{-1}$ with
\begin{equation}
   \label{e:rcrbis}
L_R =
\vligne{r_0 \\
r_1 & r_0 \\
\vdots & \ddots & \ddots \\
r_{\ell-1} & \ldots & r_1 & r_0
},
\qquad 
U_R=
\vligne{
 1 & -r_{\ell-1} & \ldots & -r_1 \\
  & \ddots &  \ddots & \vdots  \\
  &  & 1  & -r_{\ell-1} \\
  &  &  & 1  \\
}
\end{equation}
\qed
\end{lem}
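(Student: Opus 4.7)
The plan is to split the claim into three parts — the identification $R=(C(\vr)^\ell)\tr$, the nonsingularity, and the LU-type factorization — and to handle each using the spectral picture already obtained in Lemma \ref{t:rootsq}.

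First I would produce the eigenstructure of $R$ explicitly. By Lemma \ref{t:rootsq}, the eigenvalues of $R$ are $\{\mu_j^{-\ell}: 1\leq j\leq \ell\}$, and by Remark \ref{r:eta} these are distinct (a Rouché argument applied to each quadratic $s_j(z)$ keeps $\mu_j$ separated across $j$, since the polynomial $\xi(z)$ has no roots of modulus $|\mu_1^\ell|$ other than $\mu_1^\ell$ itself). I would then exhibit the left eigenvectors. Because the underlying scalar random walk $\widetilde P_\ell$ has bandwidth $\ell$, every geometric sequence $x_n=\mu_j^{-n}$ with $s_j(\mu_j^{-1})=0$ is a left ``harmonic'' direction for the banded scalar chain; regrouping $\ell$ by $\ell$ translates this into the relation
\[
\vligne{1 & \mu_j^{-1} & \cdots & \mu_j^{-(\ell-1)}} R
\;=\;
\mu_j^{-\ell}\,
\vligne{1 & \mu_j^{-1} & \cdots & \mu_j^{-(\ell-1)}},
\]
for each $j$. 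On the other hand, the companion matrix $C(\vr)$ has characteristic polynomial $r(z)=\prod_i (z-\mu_i^{-1})$, whose right eigenvectors are the Vandermonde columns $\vligne{1 & \mu_j^{-1} & \cdots & \mu_j^{-(\ell-1)}}\tr$ with eigenvalues $\mu_j^{-1}$. Transposing and raising to the $\ell$-th power yields the same left eigenvectors with eigenvalues $\mu_j^{-\ell}$ for $(C(\vr)^\ell)\tr$. Since the Vandermonde matrix built from the distinct $\mu_j^{-1}$ is invertible, $R$ and $(C(\vr)^\ell)\tr$ coincide on a basis and are therefore equal.

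Nonsingularity follows immediately: the companion matrix $C(\vr)$ is nonsingular iff $r_0\neq 0$, and $r_0=\prod_i(-\mu_i^{-1})$ is nonzero because each $|\mu_i|>1$ by Lemma \ref{t:rootsq}. For the factorization $R=L_R U_R^{-1}$, I would rewrite it as $R\,U_R=L_R$ and verify entrywise. Both sides are lower-triangular Toeplitz-structured $\ell\times\ell$ matrices, and the identity is equivalent to the polynomial identity encoded in $r(z)=z^\ell-\sum_{i=0}^{\ell-1} r_i z^i$; using that $(C(\vr)^\ell)\tr$ applied to any $(1,\mu_j^{-1},\ldots,\mu_j^{-(\ell-1)})$ returns $\mu_j^{-\ell}$ times the same vector, one can collapse the product $R\,U_R$ into $L_R$ by repeatedly applying the companion recurrence. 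Alternatively, one can observe that $R$ is similar to $\mathrm{diag}(\mu_j^{-\ell})$ via the Vandermonde matrix, and that the proposed $L_R$ and $U_R$ admit the same Vandermonde-based diagonalization, reducing the identity to a scalar factorization of $r(z)$.

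The step I expect to be the main obstacle is the LU verification: the companion representation determines $R$ up to structural identity, but showing that the specific Toeplitz-banded factors $L_R$ and $U_R$ above (as opposed to any LU decomposition) produce the claimed product requires carefully exploiting the Toeplitz form of $R$ inherited from $P_\ell$. I would lean on the general machinery of \cite[Section 5.5]{blm05}, which packages exactly this Toeplitz $LU$ factorisation for M/G/1-type chains with bounded displacements, and check that the hypotheses (irreducibility of $\widetilde P_\ell$, positivity of $\mu_i^{-1}$, degree equal to $\ell$) are in force here.
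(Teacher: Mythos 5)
The paper gives Lemma \ref{t:R} without proof, deferring entirely to Lemma \ref{t:rootsq} and to \cite[Section 5.5]{blm05}; your proposal ultimately leans on the same reference, but you flesh out the intermediate ideas in a different logical order. You take a spectral-first route: establish the left eigenstructure of $R$ directly from the scalar random walk $\widetilde P_\ell$, show $(C(\vr)^\ell)\tr$ has the same eigenpairs, and conclude equality. The paper runs the opposite way — it derives the spectral decomposition in Theorem \ref{t:Rsp} \emph{from} Lemma \ref{t:R}, so your order is not circular, which is good. However, the crux of your argument — that the Vandermonde rows $\vligne{1 & \mu_j^{-1} & \cdots & \mu_j^{-(\ell-1)}}$ are left eigenvectors of $R$ with eigenvalue $\mu_j^{-\ell}$ — is asserted only via the informal ``harmonic direction'' picture. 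One direction is easy (left eigenvector of $R$ $\Rightarrow$ left null vector of $\Xi(\mu_j^\ell)$, by multiplying $R^2A_{-1}+R(A_0-I)+A_1=0$ on the left), but the converse, that every left null vector of $\Xi$ at a root inside the unit disk actually yields an eigenvector of the minimal solution $R$, is exactly the solvent-theoretic content of \cite[Section 5.5]{blm05} that you and the paper both end up invoking. So your proposal is correct in substance but is really a more explicit rendering of the same citation, not an independent proof; the honest acknowledgment at the end of your proposal is the right assessment. Two small nits: distinctness of the $\mu_j$ follows more directly from $(p\mu_j+q)^2/\mu_j=\omega_\ell^{j-1}$ (which recovers $j$ from $\mu_j$) than from a Rouch\'e separation, since Rouch\'e as you applied it only locates one root of each $s_j$ inside the unit disk rather than separating the $\mu_j$ across different $j$; and for the LU step, checking $R\,U_R = L_R$ entrywise is feasible but requires the banded Toeplitz form of $R$ inherited from the companion power, which again is the blm05 machinery by another name.
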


The spectral decomposition of $R$ given below is a straightforward
consequence of Lemma \ref{t:R}.

\begin{thm}
   \label{t:Rsp}
The matrix $R$ is given by
\begin{equation}
   \label{e:Rsp}
R = V_R^{-1} D_R^\ell V_R
\end{equation}
where $V_R$ is the Vandermonde matrix
\[
V_R = \vligne{
1 & \mu_1^{-1} & \ldots & \mu_1^{-(\ell-1)} \\
1 & \mu_2^{-1} & \ldots & \mu_2^{-(\ell-1)} \\
\vdots \\
1 & \mu_\ell^{-1} & \ldots & \mu_\ell^{-(\ell-1)} \\
}
\]
and $D_R$ is the diagonal matrix
\[
D_R = \vligne{ 
\mu_1^{-1} \\
 & \mu_2^{-1} \\
 & & \ddots \\
 & & & \mu_\ell^{-1}
}.
\]
All left eigenvectors of $R$ have the form $\vligne{1 & \mu_j^{-1} & \ldots & \mu_j^{-(\ell-1)} }$.
\end{thm}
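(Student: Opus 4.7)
The plan is to diagonalize the companion matrix $C(\vr)$ of Lemma~\ref{t:R} in Vandermonde form, then raise to the $\ell$th power and transpose; (\ref{e:Rsp}) is essentially the standard Vandermonde diagonalization of a companion matrix, translated through the identification $R = (C(\vr)^\ell)\tr$.

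The first step is to verify that the eigenvalues $\mu_1^{-1}, \ldots, \mu_\ell^{-1}$ of $C(\vr)$ are pairwise distinct, which is needed both for $V_R$ to be nonsingular and for $C(\vr)$ to be diagonalizable. By Lemma~\ref{t:rootsq}, $\mu_j$ is the large root of the quadratic $s_j(z) = (pz+q)^2 - \omega_\ell^{j-1} z$, and a common root of $s_{j_1}$ and $s_{j_2}$ with $j_1 \ne j_2$ would have to satisfy $(\omega_\ell^{j_1-1} - \omega_\ell^{j_2-1})z = 0$ and hence equal $0$, which is impossible since $s_j(0) = q^2 > 0$.

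Once distinctness is in hand, I would invoke the well-known fact that for each eigenvalue $\lambda_j = \mu_j^{-1}$ of $C(\vr)$, the Vandermonde vector $\vw_j = \vligne{1 & \lambda_j & \cdots & \lambda_j^{\ell-1}}\tr$ is a right eigenvector: the top $\ell-1$ rows of $C(\vr)\vw_j = \lambda_j \vw_j$ are immediate from the subdiagonal of ones, and the last row reduces to $\sum_{i=0}^{\ell-1} r_i \lambda_j^i = \lambda_j^\ell$, which is exactly the statement that $\lambda_j$ annihilates $r(z)$ in (\ref{e:rofz}). Assembling the $\vw_j$ as the columns of a matrix $W$ gives $C(\vr) = W D_R W^{-1}$, and hence $C(\vr)^\ell = W D_R^\ell W^{-1}$.

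Transposition together with the identity $W\tr = V_R$ then yields
\[
R = (C(\vr)^\ell)\tr = (W\tr)^{-1} D_R^\ell W\tr = V_R^{-1} D_R^\ell V_R,
\]
which is (\ref{e:Rsp}). The assertion on left eigenvectors drops out from $V_R R = D_R^\ell V_R$, showing that each row of $V_R$ is a left eigenvector of $R$ for the eigenvalue $\mu_j^{-\ell}$. The only step that is not purely mechanical is the distinctness of the $\mu_j^{-1}$; once that is settled, everything else is a routine application of the companion-matrix spectral identity.
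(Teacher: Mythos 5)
Your proof is correct and takes essentially the same route as the paper: both diagonalize the companion matrix $C(\vr)$ via Vandermonde vectors and use $R = (C(\vr)^\ell)\tr$, with your right-eigenvector check $C(\vr)\vw_j = \mu_j^{-1}\vw_j$ being the transpose of the paper's row computation $\vligne{1 & \mu_j^{-1} & \ldots & \mu_j^{-(\ell-1)}}C(\vr)\tr$. The one small addition is that you actually prove the $\mu_j^{-1}$ are pairwise distinct (via the quadratics $s_j$), whereas the paper simply asserts it.
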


\begin{proof}
  The matrix $R$ is diagonalisable because the $\mu_j$ are all
  different, and by (\ref{e:rcr}), we only need to verify that
  $C(\vr)\tr = V_R^{-1} D_R V_R$, or $V_R C(\vr)\tr = D_R V_R$.  For
  the $j$th row of $V_R$, we write
\begin{align*}
\vligne{1 & \mu_j^{-1} & \ldots & \mu_j^{-(\ell-1)} } C(\vr)\tr
 & = \vligne{\mu_j^{-1} & \ldots & \mu_j^{-(\ell-1)} & \sum_{0 \leq
      \nu \leq \ell-1} r_\nu \mu_j^{-\nu}} \\
 & = \vligne{\mu_j^{-1} & \ldots & \mu_j^{-(\ell-1)} & \mu_j^{-\ell}}
\end{align*}
by the definition (\ref{e:rofz}) of the polynomial $r(z)$.
\end{proof}

\begin{cor}
   \label{t:asymptotic}
The maximum eigenvalue of $R$ is $\eta = \rho^{2 \ell}$, and its 
corresponding left eigenvector is 
\begin{equation}
   \label{e:u}
\vu = \vligne{1 & \rho^2 & \ldots & \rho^{2(\ell-1)}}\tr,
\end{equation}
where $\rho = p/q$.  
% Therefore, 
% %
% \begin{equation}
%    \label{e:pijlim}
% \pi_j = c(\ell,p) \rho^{2j}  + o(\rho)^{2j}
% \end{equation}
% asymptotically as $j \rightarrow \infty$, with
% %
% \begin{equation}
%    \label{e:aofp}
% c(\ell,p)= \vpi_0 \tr \vv.
% \end{equation}
% %
\end{cor}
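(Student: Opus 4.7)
The plan is to read the corollary off the spectral decomposition in Theorem~\ref{t:Rsp} by identifying explicitly which of the roots $\mu_j$ of the quadratics $s_j(z)=(pz+q)^2-\omega_\ell^{j-1}z$ introduced in Lemma~\ref{t:rootsq} has the smallest modulus; once that root is shown to be $\mu_1 = (q/p)^2$, both the value of $\eta$ and the form of $\vu$ follow immediately.

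First I would recall from Lemma~\ref{t:rootsq} that the eigenvalues of $R$ are exactly $\{\mu_j^{-\ell}: 1 \leq j \leq \ell\}$, and that for $j=1$ one has $\sigma_1=1$ and $\mu_1 = (q/p)^2$. To compare moduli, I would use Vieta's formula for $s_j(z) = p^2 z^2 + (2pq - \omega_\ell^{j-1})z + q^2$, which gives $\sigma_j \mu_j = (q/p)^2$, whence
\[
|\mu_j| = \frac{(q/p)^2}{|\sigma_j|} \geq (q/p)^2 = \mu_1,
\]
with equality iff $|\sigma_j|=1$.

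The key observation, and really the only non-trivial step, is to rule out equality for $j \neq 1$. Here I would invoke Remark~\ref{r:eta}: the polynomial $\xi(z)$ has no roots on the unit circle other than $z_\ell=1$. Since $\sigma_j^\ell$ is a root of $\xi(z)$ and $\sigma_j \neq \sigma_1 = 1$ for $j \neq 1$ (which is immediate from $s_j(1) = 1 - \omega_\ell^{j-1} \neq 0$), we get $|\sigma_j^\ell|<1$, hence $|\sigma_j|<1$, hence $|\mu_j| > \mu_1$ for $j \neq 1$. Therefore $\mu_1^{-\ell}$ is strictly the largest eigenvalue in modulus, so
\[
\eta = \mu_1^{-\ell} = (p/q)^{2\ell} = \rho^{2\ell}.
\]

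For the eigenvector, Theorem~\ref{t:Rsp} states that $R = V_R^{-1} D_R^\ell V_R$, so the rows of $V_R$ are the left eigenvectors of $R$. The row corresponding to the eigenvalue $\mu_1^{-\ell} = \eta$ is
\[
\vligne{1 & \mu_1^{-1} & \ldots & \mu_1^{-(\ell-1)}} = \vligne{1 & \rho^2 & \ldots & \rho^{2(\ell-1)}},
\]
which is precisely $\vu\tr$ in (\ref{e:u}). The main obstacle is the strict inequality $|\sigma_j|<1$ for $j>1$, but this is handed to us by Remark~\ref{r:eta}; everything else is a direct consequence of Vieta's formulas and the spectral decomposition already established.
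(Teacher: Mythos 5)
Your overall strategy is sound but differs from the paper's, and the one step you yourself flag as ``the key observation'' is where the argument breaks down. The paper's proof is a one-liner: from Theorem~\ref{t:Rsp} the candidate eigenvector $\vligne{1 & \mu_1^{-1} & \cdots & \mu_1^{-(\ell-1)}}$ is strictly positive (since $\mu_1^{-1}=(p/q)^2>0$), and because $R$ is a nonnegative matrix, the Perron--Frobenius theorem forces a strictly positive left eigenvector to belong to the spectral radius. That identifies $\eta=\mu_1^{-\ell}=\rho^{2\ell}$ with no modulus comparison at all. You instead try to order the $|\mu_j|$ directly via Vieta, which is a legitimate alternative route, but it requires the strict inequality $|\sigma_j|<1$ for $j\neq 1$, and your justification of that inequality does not hold up.

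Concretely: you argue that $\sigma_j\neq 1$ (true, from $s_j(1)=1-\omega_\ell^{j-1}\neq 0$) and then jump to $|\sigma_j^\ell|<1$ by appealing to Remark~\ref{r:eta}. But Remark~\ref{r:eta} only says that the sole root of $\xi$ of modulus $1$ is $z_\ell=1$; it does not exclude $\sigma_j^\ell=1$ with $\sigma_j\neq 1$, since $\sigma_j$ could a priori be a nontrivial $\ell$-th root of unity. So ``$\sigma_j\neq 1$ implies $|\sigma_j^\ell|<1$'' is a non sequitur. The gap is fillable, most cleanly without Remark~\ref{r:eta} at all: if $|\sigma_j|=1$, write $\sigma_j=e^{\ic\theta}$; from $s_j(\sigma_j)=0$ one gets $|p\sigma_j+q|^2=|\sigma_j|=1$, i.e. $1-2pq(1-\cos\theta)=1$, forcing $\theta=0$, hence $\sigma_j=1$ and then $\omega_\ell^{j-1}=1$, i.e. $j=1$. (Alternatively, note that $\sigma_1^\ell=1$ already supplies the unique modulus-one eigenvalue $z_\ell$ of $G$ allowed by Lemma~\ref{t:roots}, so all other $\sigma_j^\ell$ must have modulus $<1$.) With that repair your argument is correct, but it is strictly longer than the Perron--Frobenius one-liner in the paper, which you could have used since you already invoke Theorem~\ref{t:Rsp} for the eigenvector.
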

\begin{proof}
The components of the eigenvector
  $\vligne{1 & \mu_1^{-1} & \ldots & \mu_1^{-(\ell-1)}}$ are all
  strictly positive since $\mu_1^{-1} = (p/q)^2 > 0$, and 
 by the Perron-Frobenius theorem it is associated to the maximal eigenvalue.
\end{proof}

We now turn to the equations (\ref{e:pia}, \ref{e:pib}) that
characterise the vector $\vpi_0$.

\begin{thm}
   \label{t:pio}
The vector $\vpi_0$ is given by
\begin{equation}
   \label{e:pio}
\vpi_0\tr  = c \ve_1\tr R,
\end{equation}
where
$
c = -\prod_{1 \leq \nu \leq \ell } (1-\mu_\nu)
$
and $\ve_1 = \vligne{1 & 0 & \ldots & 0}\tr$.
\end{thm}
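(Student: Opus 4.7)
The plan is to verify directly that the ansatz $\vpi_0\tr = c\,\ve_1\tr R$ satisfies the two linear conditions (\ref{e:pia}) and (\ref{e:pib}) that jointly characterise $\vpi_0$. The homogeneous equation (\ref{e:pia}) fixes the direction of $\vpi_0$; the normalisation (\ref{e:pib}) then pins down the scalar $c$. These two steps are essentially independent.

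For (\ref{e:pia}), I would substitute the ansatz, expand $B = A_0 + A_{-1}\vone\,\ve_1\tr$, and invoke the Riccati equation $R^2 A_{-1} + R(A_0-I) + A_1 = 0$ satisfied by $R$ to rewrite
\[
R(B + R A_{-1} - I) = -A_1 + R A_{-1}\vone\,\ve_1\tr.
\]
Left-multiplying by $\ve_1\tr$ collapses the requirement to the scalar identity $\ve_1\tr R A_{-1}\vone = p_\ell$, because the first row of the block $A_1$ equals $p_\ell\,\ve_1\tr$. To establish that identity I would appeal to the standard QBD identity $R A_{-1} = A_1 G$ (a consequence of the two equivalent representations $R = A_1(I - A_0 - R A_{-1})^{-1}$ and $G = (I - A_0 - A_1 G)^{-1} A_{-1}$) together with $G\vone = \vone$, yielding $R A_{-1}\vone = A_1\vone$, whose first component is indeed $p_\ell$.

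For (\ref{e:pib}) the idea is to evaluate $\ve_1\tr R (I-R)^{-1}\vone$ in closed form via the spectral decomposition (\ref{e:Rsp}). Two auxiliary computations feed into the sum over eigenvalues: the $k$th entry of $V_R\vone$ is the geometric sum $(1-\mu_k^{-\ell})/(1-\mu_k^{-1})$, while the $k$th entry of the first row of $V_R^{-1}$ is the value at $0$ of the Lagrange basis polynomial at node $\mu_k^{-1}$, which evaluates to $\mu_k^{\ell-1}/s'(\mu_k)$, where $s(z) = \prod_\nu(z - \mu_\nu)$. After the factors cancel,
\[
\ve_1\tr R (I-R)^{-1}\vone = \sum_{k=1}^{\ell}\frac{1}{s'(\mu_k)(\mu_k - 1)}.
\]
I would then recognise each summand as the residue at $z = \mu_k$ of the proper rational function $1/(s(z)(z-1))$, and use the fact that the residues of such a function sum to zero to equate the displayed sum with minus the residue at $z=1$, namely $-1/s(1)$. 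Imposing (\ref{e:pib}) then forces $c = -s(1) = -\prod_{\nu}(1-\mu_\nu)$.

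The main obstacle I expect is the normalisation step: the explicit first row of the inverse Vandermonde matrix and the identification of the resulting sum with a residue each require some careful bookkeeping. By contrast, the verification of (\ref{e:pia}) is short, relying only on the Riccati equation for $R$ and the classical QBD identity $R A_{-1} = A_1 G$.
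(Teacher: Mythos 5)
Your proposal is correct, but it departs from the paper's proof in both steps, most substantially in the second. For the direction of $\vpi_0$, the paper does not verify an ansatz: it solves (\ref{e:pia}) directly, rewriting it as $\vpi_0\tr(I - A_0 - R A_{-1}) = (\vpi_0\tr A_{-1}\vone)\,\ve_1\tr$, inverting, and using $\ve_1\tr A_1 = p_\ell\,\ve_1\tr$ together with $R = A_1(I - A_0 - RA_{-1})^{-1}$ to conclude $\vpi_0\tr \propto \ve_1\tr R$. Your verification via the Riccati equation and the identities $RA_{-1} = A_1 G$, $G\vone = \vone$ is a valid alternative; note only that it implicitly relies on the uniqueness (up to the normalisation) of the solution of (\ref{e:pia}), which the paper asserts. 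The genuine divergence is in determining $c$: the paper substitutes the LU-type factorisation $R = L_R U_R^{-1}$ of Lemma \ref{t:R} into (\ref{e:pib}), reduces the condition to $c\,r_0\,\ve_1\tr(U_R - L_R)^{-1}\vone = 1$, and exploits the fact that $U_R - L_R$ is circulant, hence diagonalised by the Fourier matrix, to obtain $c = r(1)/(-r(0)) = -\prod_\nu(1-\mu_\nu)$. You instead evaluate $\ve_1\tr R(I-R)^{-1}\vone$ from the spectral decomposition (\ref{e:Rsp}), using the Lagrange form of the first row of $V_R^{-1}$ and the fact that the residues of $1/(s(z)(z-1))$ sum to zero; your intermediate formulas (the geometric sum for $V_R\vone$, the value $\mu_k^{\ell-1}/s'(\mu_k)$, and the final sum $\sum_k 1/(s'(\mu_k)(\mu_k-1)) = -1/s(1)$) all check out. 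The circulant route is shorter once one accepts the Fourier diagonalisation cited from the literature; your residue computation is more self-contained and in effect re-derives the inverse-Vandermonde entries that the paper only invokes later, in the computation of $c(\ell,p)$, via Macon and Spitzbart.
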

\begin{proof}
First, we replace $B$ in (\ref{e:pia}) by its expression (\ref{e:B})
and so
\[
\vpi_0\tr (I - A_0 - R A_{-1}) = (\vpi_0\tr A_{-1} \vone) \ve_1\tr.
\]
As $\ve_1\tr A_1 = p_\ell \ve_1\tr$, we rewrite the last equation
and obtain
\[
\vpi_0\tr = p_\ell^{-1} (\vpi_0\tr A_{-1} \vone) \ve_1\tr A_1 (I - A_0
- R A_{-1})^{-1} = p_\ell^{-1} (\vpi_0\tr A_{-1} \vone) \ve_1\tr R
\]
by \cite[Eqn (6.9)]{lr99}, which proves (\ref{e:pio}) for some scalar
$c$ to be determined.

We replace in (\ref{e:pib}) $\vpi_0$ by its expression in
(\ref{e:pio}) and $R$ by its expression $R=L_R U_R ^{-1}$ from Lemma
\ref{t:R}, and we obtain after simple manipulations that
\begin{equation}
   \label{e:pioA}
c \, r_0 \ve_1\tr (U_R - L_R)^{-1} \vone = 1.
\end{equation}

The matrix $U_R - L_R$ is a circulant matrix and may be written as
\[
U_R - L_R = \overline{S} \diag(\vr^*) S,
\]
with 
\[
S = \frac{1}{\sqrt{\ell}}  \Omega_\ell,
\qquad
\vr^* = \Omega_\ell \vligne{1-r_0 \\ -r_1 \\ \ddots \\ -r_{\ell-1}}
\]
and $(\Omega_\ell)_{j,j'} = \omega_\ell^{jj'}$, $0 \leq j, j' \leq
\ell-1$ (see \cite[Theorem 2.4]{blm05}).   
Thus, (\ref{e:pioA}) may be written as
\begin{equation}
   \label{e:pioB}
c \, r_0 \ve_1\tr \overline{S} \diag(\vr^*)^{-1} S \vone = 1.
\end{equation}
One easily verifies that $\ve_1\tr \overline{S} =
\frac{1}{\sqrt{\ell}} \vone\tr$ and $S \vone = \sqrt{\ell} \ve_1$, and
(\ref{e:pioB}) becomes 
\[
c \, r_0 \vone\tr \diag(\vr^*)^{-1} \ve_1 = 1
\qquad 
\mbox{or}
\qquad
c = r_0^* r_0^{-1}.
\]
With $r_0 = -r(0) = -\prod_{1 \leq i \leq \ell} (-\mu_i^{-1})$ and
$r_0^* = r(1) = \prod_{0 \leq i \leq \ell-1} (1-\mu_i^{-1})$, this
completes the proof.
\end{proof}

We bring together the different results in this section, and rewrite
(\ref{e:piklim}) as $\vpi_k\tr \sim (\vpi_0\tr \vv) \vu\tr \eta^k$ or,
if we focus on the first components of $\vpi_k$ and $\vu$,
\[
\pi_{n} \sim c(\ell, p) \rho^{2 n} \qquad \mbox{as $n
  \rightarrow \infty$},
\]
where 
\begin{align}
  \nonumber
c(\ell, p) &= \vpi_0\tr \vv = c \ve_1\tr R \vv = -\prod_{1 \leq \nu
   \leq \ell} (1-\mu_\nu) \rho^{2 \ell} \ve_1\tr V_R^{-1} \ve_1 
  \qquad \mbox{by (\ref{e:Rsp})}
\\
  \nonumber
& = -\prod_{1 \leq \nu
   \leq \ell} (1-\mu_\nu) \rho^{2 \ell} (V_R^{-1})_{11}
\\
   \nonumber
& = -\prod_{1 \leq \nu \leq \ell} (1-\mu_\nu) \rho^{2 \ell}
  \prod_{2 \leq \nu \leq \ell} \mu_\nu^{-1} (\mu_\nu^{-1} -\mu_1^{-1})^{-1}
\\
  \nonumber
  & = -(1-\mu_1) \rho^{2 \ell} \prod_{2 \leq \nu \leq \ell} (1-\mu_\nu)
    (1-\mu_1^{-1} \mu_\nu)^{-1}
\\
   \label{e:clp}
  & = (\mu_1-1) \mu_1^{-1} \prod_{2 \leq \nu \leq \ell} (1-\mu_\nu)
    (\mu_1 - \mu_\nu)^{-1} 
\end{align}
by Lemma \ref{t:rootsq} --- we refer to Macon and Spitzbart \cite{ms58}
to justify the third
  equality.

%====================================
\section{Sojourn times}
\label{s:sojourn}

Briefly stated, to follow the Poisson clumping approach in
Aldous~\cite{Ald-heu}, one firstly reformulates questions about the
distribution of $M_T$ as questions about random sets.  To this effect,
we need to define a clump $E_k$ for each state $k$, that is, a set of
states such that
% \[
% [M_T < k]  \ \equiv \   [{\cal T}_k \cap [0,T]] \ \mbox{is empty},
% \]
\[
\mbox{the event} \ 
[M_T < k]  \ \mbox{is equivalent to} \   \{{\cal T}_k \cap [0,T]\} \ \mbox{being empty},
\]
where ${\cal T}_k = \{t: S_t \in E_k \}$.  As the transition matrix
$P_\ell$ of $\{Z_t\}$ allows for jumps, we need a clump big enough
that the process is forced to visit one of its states whenever
$M_T \geq k$, for this reason, we chose
$E_k = \{k, k+1, \ldots, k+\ell-1\}$.

{ We approximate each random set ${\cal T}_k$ by a mosaic; in
  the present case, the mosaic is a sparse Poisson process of
  independent random sets on the real line, each set being distributed
  as intervals of continuous sojourn in $E_k$.  }

{Next, we take the
  stationary version of the process $\{X(t)\}$; if $k$ is large
  enough, the random visits to $k$ are close to a Poisson process and,
  repeating Aldous~\cite[Section B8]{Ald-heu}, one finds that the
  clump rate is $\pi_k / \E(C)$   where ${\mathbb{E}(C)}$ is the sojourn
 time in a clump.
This leads to the
  approximation (\ref{e:MTa}).
}

Finally, 
following \cite[Section B12]{Ald-heu}, we approximate the
expected sojourn time in the clump by the total expected sojourn time
in $E_k$ for the random walk on $(-\infty, +\infty)$ with jump size
distribution $\{p_n: -\ell \leq n \leq \ell\}$, that is, the Markov
chain with transition matrix (\ref{e:ptilde}).

As $p < q$, this Markov chain is transient and drifts to $- \infty$, so that
the total expected number of visits $w_{ij}$ to state $j$,
starting from state $i$, during the whole history of the process is
finite for all $i$ and $j$.  We denote by $W$  the matrix with entries
$W_{i,j} = w_{\nu+i,\nu+j}$ for $0 \leq i,  j \leq \ell-1$, {\em independently}
of $\nu$.

% \ (Defining $\nu_{j}$ is best conceptualized as follows. I
% am at a large level, say, $J$. \ I place the origin at $J$ and I wish to find
% the probability $\nu_{j}$ of returning to $J$ starting from $J-j$,
% equivalently, to $0$ now. \ I reverse the walk direction. Now $J-j$ is $j$ and
% $J+j$ is $-j$. \ The trend is now towards the positive integers rather than
% the negative integers.) \

To determine $W$, we need two matrices: $G$ defined in (\ref{e:G}) and
$H$, similar to $G$ in the reverse direction, that is, for
$0 \leq i, j \leq \ell-1$, $H_{ij}$ is the probability that, starting
from state $\ell n+i$, the Markov chain visits $\ell (n+1)+j$ before
any other state with index $s >\ell n + \ell-1$, independently of
$n$.  The matrix $H$ 
is the unique sub-stochastic solution of the equation
\begin{equation}
   \label{e:H}
A_{-1} H^2 + (A_0 - I) H + A_1 =0.
\end{equation}
One easily verifies that
\begin{align}
  \nonumber
W & = I + (A_0 + A_1 G + A_{-1} H) + (A_0 + A_1 G + A_{-1} H)^2 + \cdots
  \\
  \nonumber
 & = \sum_{\nu \geq 0} (A_0 + A_1 G + A_{-1} H)^\nu \\
   \label{e:W}
 & = (I - (A_0 + A_1 G + A_{-1} H))^{-1}.
\end{align}
The next lemma  provides us with the justification for (\ref{e:MT}).
\begin{lem}
   \label{t:lambda}
For large values of $k$,
\begin{equation}
   \label{e:explambda}
\P[M_T < k]  \approx \exp(- \frac{\chi_\ell(p)}{2 \ell}T \rho^{2k})
\end{equation}
with 
\[
\chi_\ell(p)= c(\ell,p) \, \vu\tr (I - (A_0 + A_1 G + A_{-1} H)) \vone.
\]
\end{lem}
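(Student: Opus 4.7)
The plan is to combine the clumping heuristic (\ref{e:MTa}) with the asymptotics of Sections~\ref{s:stationary}--\ref{s:toeplitz} and a direct computation of the per-cycle escape rate from $E_k$ for the two-sided walk (\ref{e:ptilde}). First I would note that $\{Z_t\}$ runs at a time scale $2\ell$ times slower than $\{S_t\}$, so that
\[
\P[M_T<k]\approx\exp\!\left(-\,r_k^Z\cdot\frac{T}{2\ell}\right),
\]
where $r_k^Z$ is the clump rate in $\{Z_t\}$-time. Writing $r_k^Z=\pi_{E_k}\,q_k$ with $q_k$ the per-step probability of leaving $E_k$ permanently once inside it, the target becomes $r_k^Z\sim c(\ell,p)\,\vu\tr(I-M)\vone\,\rho^{2k}$ with $M=A_0+A_1G+A_{-1}H$.

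The factor $\pi_{E_k}$ is immediate from the analysis of the stationary distribution: by (\ref{e:piklim}), Corollary~\ref{t:asymptotic} and (\ref{e:clp}) one has $\pi_n\sim c(\ell,p)\rho^{2n}$ as $n\to\infty$, hence
\[
\pi_{E_k}\sim c(\ell,p)\rho^{2k}\,\vu\tr\vone,
\]
and the distribution on $E_k$, conditional on being in $E_k$, converges to $\vu/(\vu\tr\vone)$.

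The main work is the computation of $q_k$. Following \cite[Sec.~B12]{Ald-heu}, I would model the clump by the sojourn of the two-sided walk in $E_k$, decomposing one ``cycle'' inside the block into a within-block step ($A_0$), an upward excursion that returns ($A_1G$), and a downward excursion that returns ($A_{-1}H$); this is precisely the meaning of $M$ and of $W=(I-M)^{-1}$ in (\ref{e:W}). Because the walk drifts to $-\infty$, upward excursions almost surely return and a clump terminates only through a downward jump that does not return, an event of probability $(A_{-1}(I-H)\vone)_i$ starting from position $i$. Averaging with the asymptotic weights $\vu/(\vu\tr\vone)$ gives
\[
q_k\approx\frac{\vu\tr A_{-1}(I-H)\vone}{\vu\tr\vone},
\]
and a short computation using $G\vone=\vone$ together with $A_{-1}\vone+A_0\vone+A_1\vone=\vone$ delivers the identity $A_{-1}(I-H)\vone=(I-M)\vone$.

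Multiplying $\pi_{E_k}$ by $q_k$ cancels the factor $\vu\tr\vone$, yielding $r_k^Z\sim c(\ell,p)\,\vu\tr(I-M)\vone\,\rho^{2k}$; dividing by $2\ell$ produces the claimed formula for $\chi_\ell(p)$. The chief obstacle is the step producing $q_k$: identifying the clump-termination rate of the recurrent chain $\{Z_t\}$ with the permanent downward-escape rate of the two-sided walk, with positions weighted by $\vu$, is where the ``$\approx$'' of the statement really sits and where the Aldous framework does the heavy lifting; the other ingredients are algebraic substitutions from identities already established in the earlier sections.
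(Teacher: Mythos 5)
Your proposal is correct and follows essentially the same route as the paper: both rest on Aldous's clump-rate formula for the set $E_k$ together with the identity $W^{-1}=I-(A_0+A_1G+A_{-1}H)$ from (\ref{e:W}), the asymptotics $\vligne{\pi_k & \ldots & \pi_{k+\ell-1}}\sim c(\ell,p)\rho^{2k}\vu\tr$, and the factor $1/(2\ell)$ for the time-scale change. The only difference is presentational — you use the ergodic-exit factorisation $\pi_{E_k}\cdot q_k$ and the (correct) identity $A_{-1}(I-H)\vone=(I-M)\vone$, whereas the paper writes the rate directly as $\vy\tr W^{-1}\vone$ from the linear system $\vlambda_k\tr W=\vy\tr$; the two are algebraically identical.
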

\begin{proof}
The proof immediately follows from \cite[Eq. B12a]{Ald-heu}: the first
passage time $\tau_k$ to $E_k$ is approximately exponential with
parameter $\lambda_k= \vlambda_k\tr \vone$, where $\vlambda_k$ is the solution to the
linear system
$
\vlambda_k\tr W = \vy\tr 
$
with $\vy\tr = \vligne{\pi_k & \pi_{k+1} & \cdots & \pi_{k+\ell
    -1}}$.
The factor $1/(2\ell)$ in the right-hand side of (\ref{e:explambda})
is required because $\lambda_k$ is determined by the Markov chain
(\ref{e:Pl}) and each unit of time there represents a full cycle
of size $2 \ell$ of the traffic light. 

By (\ref{e:W}), 
\[
\lambda_k\tr = \vy\tr (I - (A_0 + A_1 G + A_{-1} H)) \vone
\]
and by (\ref{e:piklim}, \ref{e:u}), 
\[
\vligne{\pi_k & \ldots & \pi_{k+\ell-1}} =
c(\ell,p) \rho^{2k} \vu\tr + o(\rho^{2k}),
\]
so that
\[
\P[M_T < k]  \approx \exp(- c(\ell,p) \vu\tr (I - (A_0 + A_1 G + A_{-1}
H)) \vone \rho^{2k}\frac{T}{2 \ell})
\]
which we rewrite as (\ref{e:explambda}).
\end{proof}

Like we did in Section \ref{s:toeplitz} for the matrix $R$, we may use
the spectral decomposition approach to obtain explicit expressions for
$G$ and $H$.  This, however, has not helped us obtain a more explicit
expression for $\chi_\ell(p)$ and we relegate the details to Appendix
\ref{s:GH}.

We illustrate on Figures \ref{f:mt1}, \ref{f:mt2} and \ref{f:mt3} the
quality of the approximation given in Lemma~\ref{t:lambda} for the
distribution of $M_T$.  The red bars are the analytical approximation
(\ref{e:explambda}); the blue density is obtained by simulation of the
random walk (\ref{e:Pl}).  The number of replications of the
simulation is 40,000 and $\ell = 4$ in all cases; $T=10^4$ in Figure
\ref{f:mt1} and $T= 10^9$ in Figures \ref{f:mt2} and \ref{f:mt3}.  The
probability $p$ that a car arrives is $0.40$ in Figures \ref{f:mt1}
and \ref{f:mt2}, and $0.45$ in Figure \ref{f:mt3}.

We have mentioned earlier that $k$ has to be sufficiently large for
(\ref{e:MTa}) to hold.  A first observation from Figures \ref{f:mt2}
and \ref{f:mt3} is that the quality of the approximation is striking
over the whole range of values of $k$ when $T= 10^9$.  It is very good
even for the much smaller $T=10^4$, as we see on Figures \ref{f:mt1}
and \ref{f:mt2} for both of which $p=0.40$.  We have run other
simulations, letting $T$ range from $10^5$ to $10^8$, and have reached
similar conclusions.  We have also increased the number
of replications,  and this does not
alter the graphs nor parameter estimates significantly.

We observe from the figures, as expected, that the range of values for
$M_T$ increases if $T$ becomes bigger, for a given $p$ (more time for
the queue to reach high values), or if $p$ increases, for a given $T$
(more chances for the queue to grow during intervals when the traffic
light is red).

\begin{rem} \em
   \label{r:conject}
We conjecture that $\chi_\ell(p) / (2 \ell) = q^2 c^2(\ell,p)$.  This
identity is proved formally in Appendix~\ref{s:exact2} for $\ell =2$ and in
\cite{fl18} for $\ell=3$ and we have experimental evidence from our
numerical analysis, the computed difference being of the order of
$10^{-15}$.   However, we have not been able to show this
analytically in all generality.
\end{rem}

%====================================
\section{Detailed queue representation}
\label{s:natural}

We apply in this section the QBD theory directly to the original transition matrix $\PP_\ell$ of
(\ref{e:Porig}).  Its stationary probability vector $\vgamma$ is
partitioned into sub-vectors $\vgamma_0$, $\vgamma_1$, $\vgamma_2$, \ldots, with
\[
\vgamma_k = \vligne{\gamma_{k,1} & \ldots & \gamma_{k,2\ell}}
\]
where the first index is the number of cars waiting in front of the
traffic light and the second is the position of the traffic light in
the cycle $R\cdots RG\cdots G$ of length $2 \ell$.  It is given by
\begin{equation}
   \label{e:gamma}
\vgamma_k = \vgamma_0 \whR^k, \qquad k \geq 0,
\end{equation}
where $\whR$ is the non-negative solution of $\whR^2 \ca_{-1} + \whR
(\ca_0 - I) + \ca_1 = 0$.  We denote by $\wheta$ its maximal
eigenvalue and by $\whu$ and
$\whv$ the corresponding left- and right-eigenvectors.  Finally, we
denote by $\whpi_k = \vgamma_k\tr \vone$ the stationary marginal
probability that $k$ cars are waiting.

\begin{thm}
   \label{t:asymptorig}
The maximal eigenvalue of $\whR$ is $\wheta = \rho^2$.  The
corresponding left eigenvector is
\begin{equation}
   \label{e:whu}
\whu = \vligne{\rho^\ell & \rho^{\ell - 1} & \cdots & \rho & 1 & \rho
  & \cdots & \rho^{\ell-1}}\tr  \\
\end{equation}
and
\begin{equation}
   \label{e:whpi}
\whpi_k = \whc(\ell,p) \rho^{2 k} + o(\rho^{2k})
\end{equation}
asymptotically as $k \rightarrow \infty$, with 
\begin{equation}
   \label{e:waofp}
\whc(\ell,p) = \frac{1-\rho^2}{2 \ell} (\whv\tr \vone)   (\whu\tr \vone).
\end{equation}
\end{thm}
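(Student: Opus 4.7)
The plan is to establish the three claims of the theorem in turn: I first check directly that $(\whu,\rho^2)$ is a left eigenpair of $\whR$, then identify it as the Perron pair via positivity of $\whu$, and finally derive the asymptotic constant by combining the spectral expansion of $\whR^k$ with the observation that the phase marginal of $\vgamma$ is uniform on $\{1,\ldots,2\ell\}$. The only nontrivial computation is a short column-by-column inspection of a sparse matrix identity; the crucial probabilistic idea is the uniform phase marginal, which lets us bypass any explicit solution of the boundary equations for $\vgamma_0$.

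For the first step I use that $\whu^T \whR = \wheta\,\whu^T$ is equivalent to
\[
\whu^T\bigl[\wheta^{2}\ca_{-1} + \wheta(\ca_0 - I) + \ca_1\bigr] = \vzero^T .
\]
Each of $\ca_{-1}$, $\ca_0$, $\ca_1$ has at most one nonzero entry per row, so the column-$j$ identity collects a single row contribution $i(j)$. The verification splits into three regimes. For $2 \leq j \leq \ell+1$ only row $j-1$ from the red block contributes and the identity reduces to $u_{j-1}(q\rho^2+p) = \rho^2 u_j$; for $\ell+2 \leq j \leq 2\ell$ only row $j-1$ from the green block contributes and gives $u_{j-1}\rho^2(q\rho^2+p) = \rho^2 u_j$; for $j=1$ only row $2\ell$ contributes and yields $u_{2\ell}\rho^2(q\rho^2+p) = \rho^2 u_1$. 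Using the elementary identity $q\rho^2 + p = \rho$ (immediate from $\rho = p/q$), these reduce to $u_{j-1} = \rho u_j$, $u_j = \rho u_{j-1}$ and $u_1 = \rho u_{2\ell}$ respectively, and are all verified by inspection against the piecewise-geometric form of $\whu$ in (\ref{e:whu}). Since $\whu$ has strictly positive entries and $\whR$ is nonnegative, Perron--Frobenius identifies $\wheta = \rho^2$ as the dominant eigenvalue; its simplicity follows along the lines of Remark~\ref{r:eta} applied to $\PP_\ell$.

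For the asymptotic formula, simple dominance of $\wheta$ gives the expansion $\whR^k = \wheta^{k}\whv\,\whu^T + o(\wheta^{k})$ with $\whu$ and $\whv$ normalized so that $\whu^T\whv = 1$, whence $\whpi_k = \vgamma_k \vone \sim \wheta^{k}(\vgamma_0\whv)(\whu^T\vone)$. To evaluate $\vgamma_0\whv$ I exploit that the phase component $\varphi_t$ of $\{X(t)\}$ evolves as a deterministic cyclic chain on $\{1,\ldots,2\ell\}$, so its stationary marginal is uniform; summing $\gamma_{k,i}$ over $k \geq 0$ gives $\sum_{k\ge 0}\vgamma_k = (2\ell)^{-1}\vone^T$, i.e.\ $\vgamma_0(I-\whR)^{-1} = (2\ell)^{-1}\vone^T$. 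Right-multiplying by $\whv$ and using $(I-\whR)^{-1}\whv = (1-\rho^{2})^{-1}\whv$ produces $\vgamma_0 \whv = (1-\rho^{2})(2\ell)^{-1}(\vone^T\whv)$, and substitution delivers (\ref{e:waofp}).

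The main obstacle is not technical but notational: one has to keep track of the three sparsity regimes in $\ca_{-1}$, $\ca_0$, $\ca_1$ simultaneously. Once that is done, the single scalar identity $p + q\rho^2 = \rho$ collapses the computation, and the uniform-phase argument handles the constant without any brute-force manipulation of the boundary block $\cb$.
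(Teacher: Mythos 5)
Your argument is correct, and its two substantive computations are the same as the paper's: the column-by-column verification that $\whu\tr\,\ca(\rho^2)=\vzero\tr$ for $\ca(z)=z^2\ca_{-1}+z(\ca_0-I)+\ca_1$ is exactly the paper's ``one easily verifies'' step, and your uniform-phase-marginal identity $\sum_{k\geq 0}\vgamma_k\tr=(2\ell)^{-1}\vone\tr$ is the same fact the paper obtains from \cite[Lemma 6.3.2]{lr99} together with the double stochasticity of $\ca_{-1}+\ca_0+\ca_1$ (the phase chain being a deterministic cycle is precisely why that matrix is doubly stochastic). Where you genuinely diverge is in showing that $\rho^2$ is the \emph{maximal} eigenvalue: the paper computes $\det\ca(z)=z^\ell\bigl(z^\ell-(p+zq)^{2\ell}\bigr)$, so that the spectrum of $\whR$ consists of $0$ (multiplicity $\ell$) and the $1/\mu_j$ of Lemma~\ref{t:rootsq}, which yields maximality \emph{and} simplicity of $\wheta=1/\mu_1=\rho^2$ at once; you instead get maximality from Perron--Frobenius via strict positivity of $\whu$, which is a legitimate shortcut. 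Two caveats, though. First, your opening ``equivalence'' holds trivially only in one direction: $\whu\tr\whR=\wheta\,\whu\tr$ does imply $\whu\tr\ca(\wheta)=\vzero\tr$, but the converse is false in general --- $z=1$ is a root of $\det\ca(z)$ whose left null vector is $\vone\tr$, yet $1$ is not an eigenvalue of $\whR$. The converse is valid here only because $\rho^2<1$ and, by the factorization $\ca(z)=(zI-\whR)\bigl[(zI+\whR)\ca_{-1}+\ca_0-I\bigr]$, the second factor is nonsingular in the open unit disk; this root-splitting fact should be invoked explicitly (it is the content of Lemma~\ref{t:roots} restated for $\PP_\ell$). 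Second, a positive left eigenvector of a nonnegative matrix identifies the spectral radius but not its simplicity, so the appeal to Remark~\ref{r:eta} (or to the paper's explicit determinant) remains necessary, as you acknowledge. With those two points made explicit, your proof is complete and the trade-off is clear: the paper's determinant computation is heavier but delivers the whole spectrum of $\whR$, while your route isolates the minimal verification needed for the asymptotics.
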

\begin{proof}
By Lemma~\ref{t:roots} applied to (\ref{e:Porig}), $\wheta$ is the
largest root strictly less than 1 of the polynomial $\det \ca(z)$,
with 
\begin{align*}
\ca(z) & = z^2 \ca_{-1} + z (\ca_0 -I) + \ca_1  
\\
& = \vligne{
-z & p+zq \\
 & \ddots & \ddots \\
 & & -z & p+zq \\
 & &  & -z & z (p+zq) \\
 & & &  & \ddots & \ddots \\
z(p+zq) & & & & & -z
}.
\end{align*}
Simple calculations show that $\det \ca(z)$ is a polynomial of degree
$3 \ell$ given by
\[
\det \ca(z) = z^\ell(z^\ell - (p+zq)^{2\ell})),
\]
so that its roots are 
\begin{itemize}
\item 
0 with multiplicity $\ell$ and 
\item 
the inverse of
the $2 \ell$ roots of $h(z)$ in (\ref{e:hofz}).  
\end{itemize}
Therefore, $\whR$ has
$\ell$ eigenvalues equal to 0 and $\ell$ eigenvalues given by
$1/\mu_j$, $1 \leq j \leq \ell$; furthermore, 
$\wheta = 1/\mu_1  = \rho^2$, it has multiplicity 1 and all other
eigenvalues are strictly less than $\wheta$ in modulus.  One easily
verifies that $\whu\tr
\ca(\rho^2) = \vzero$, so that $\whu$ is 
the Perron-Frobenius eigenvector of $\whR$.

It results from (\ref{e:gamma}) that
\[
\vgamma_k = (\vgamma_0\tr \whv) \whu \ \rho^{2 k} + o(\rho^{2 k} )
\] 
and from \cite[Lemma 6.3.2]{lr99} that $\vgamma_0\tr = \valpha\tr
(I-R)$ where $\valpha$ is the stationary probability vector of the
matrix 
$\ca = \ca_{-1} + \ca_0 + \ca_1$.  As $\ca$ is doubly stochastic,
$\valpha = 1/(2 \ell) \vone$, and so
\begin{align*}
\whpi_k & = \vgamma_k\tr \vone \\
   & = 1/(2 \ell) \vone\tr (I-R) \whv \ \whu\tr \vone \rho^{2k} +
     o(\rho^{2k})  \\
   & = (1-\rho^2)/(2 \ell) \vone\tr \whv \ \whu\tr \vone \rho^{2k} +
     o(\rho^{2k})
\end{align*}
and this concludes the proof.
\end{proof}

We show on Figure \ref{f:pipiHp40l10} the distributions of the
detailed queue $\{S_t\}$ and its approximation $\{Z_t\}$.  We
clearly see the effect of the difference  of behaviour when the queue is
nearly empty.  Nevertheless, comparing (\ref{e:whpi}) with
(\ref{e:piklim}), we see that the 
distribution of the number of waiting cars has the same asymptotic
decay.

\begin{figure}[tb]
\centering
\includegraphics[scale=0.5]{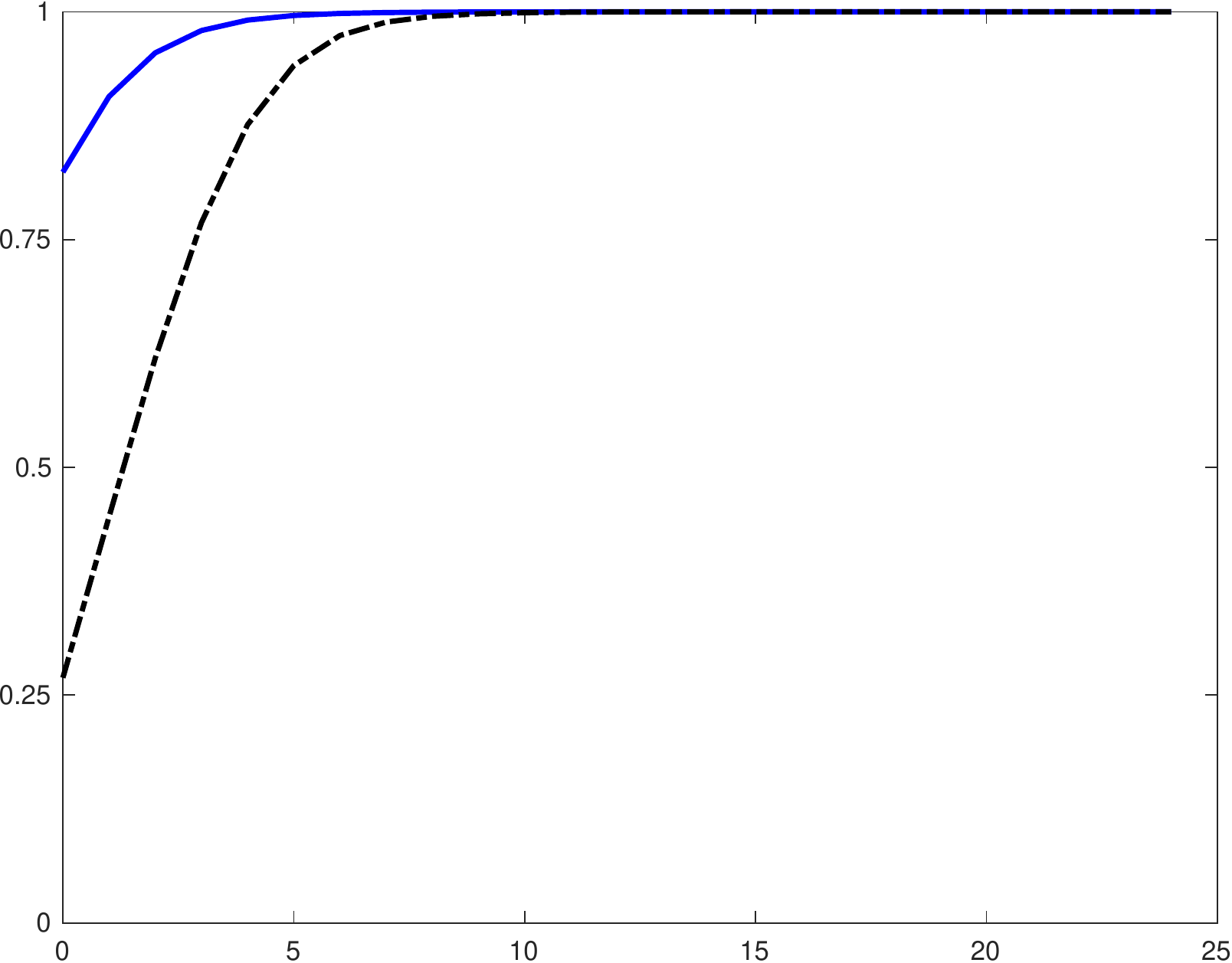} 
 \caption{
  \label{f:pipiHp40l10}
Cumulative probability distribution for the two models, the continuous
blue line for the random
walk model, the dashed black line for the detailed model.  The
parameters are $\ell=10$, $p=0.4$.}
\end{figure}

\begin{figure}[tb]
\centering
\includegraphics[scale=0.5]{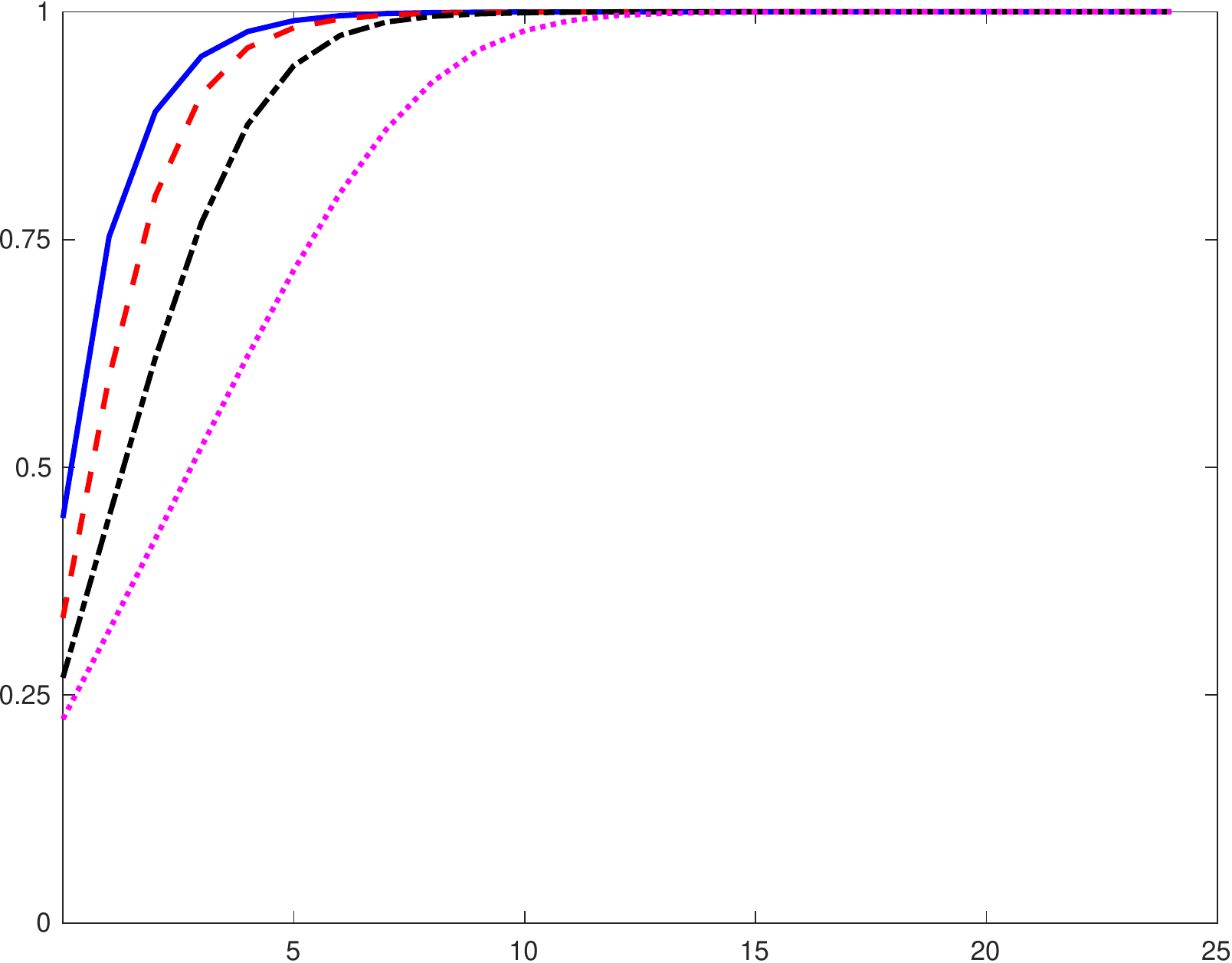} 
 \caption{
  \label{f:piH40l151020}
Cumulative probability distribution for the detailed model, for
$p=0.4$.  The continuous  blue
line corresponds to $\ell = 1$, the red dashed line to
$\ell=5$, the black dot-dashed line to $\ell = 10$ and the magenta dotted
line is for $\ell= 20$.}
\end{figure}

We give on Figure \ref{f:piH40l151020} the distribution $\whpi$ for
different values of $\ell$.  One observes that the queue becomes
stochastically much greater as $\ell$ increases.  This is explained by
the fact that the queue builds up during an interval of red light.
Under normal circumstances, it will be served during the succeeding
interval of green light, but it is possible that some cars remain when
the light becomes red again, so that build-ups might accumulate for a
while --- despite of which, the {\em decay} of the distribution
remains the same $\rho^2$ independently of $\ell$.

Finally, to determine the tail of the distribution of $\whM_T$, we use
for clumps 
the sets $\whE_k = \{(k,i): 1 \leq i \leq 2 \ell\}$.  The proof of
Lemma \ref{t:whM} is similar to that of Lemma \ref{t:lambda} and 
is omitted.

%\newpage

\begin{lem}
   \label{t:whM}
For large values of $k$,
\begin{equation}
   \label{e:whM}
\P[\whM_T < k]  \approx \exp(- \frac{\whchi_\ell(p)}{2 \ell} T \rho^{2k})
\end{equation}
with 
\[
\whchi_\ell(p) = (1-\rho^2) (\whv\tr \vone)   (\whu\tr
(I-(\ca_0 + \ca_1\whG + \ca_{-1} \whH))\vone) 
\]
where $\whG$ and $\whH$ are respectively the solutions of
\begin{align*}
\ca_{-1} + (\ca_0 -I) \whG + \ca_1 \whG^2  = 0 
\qquad \mbox{and}  \qquad 
\ca_{1} + (\ca_0 -I) \whH + \ca_{-1} \whH^2 & = 0. 
\qquad \mbox{\qed}
\end{align*}
%
%\qed
\end{lem}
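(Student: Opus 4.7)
The plan is to parallel the proof of Lemma~\ref{t:lambda}, now applied to the detailed QBD $(\ca_{-1},\ca_0,\ca_1)$ of (\ref{e:Porig}) rather than the lumped QBD $(A_{-1},A_0,A_1)$ of (\ref{e:Pl}). Because the detailed chain changes level by at most $\pm 1$ per step, a single level with all phases is enough to form a clump, and the event $\{\whM_T<k\}$ is equivalent to the stationary chain $\{X(t)\}$ avoiding $\whE_k=\{(k,i):1\leq i\leq 2\ell\}$ throughout $[0,T]$, i.e.\ to $\{\widehat{\tau}_k>T\}$ where $\widehat{\tau}_k$ is the hitting time of $\whE_k$.

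By Aldous' clumping formula \cite[Eqn.~B12a]{Ald-heu}, $\widehat{\tau}_k$ is approximately exponential with rate $\widehat{\lambda}_k = \widehat{\vlambda}_k\tr\vone$, where $\widehat{\vlambda}_k\tr \whW = \vgamma_k\tr$ and $\whW$ is the total-visits matrix to level $k$ for the bi-infinite extension of $\PP_\ell$. Exactly as in the derivation of (\ref{e:W}), decomposing any return to the current level as either a direct step ($\ca_0$), an upward excursion followed by a return ($\ca_1\whG$), or a downward excursion followed by a return ($\ca_{-1}\whH$), and summing the resulting geometric series, gives $\whW = (I-(\ca_0+\ca_1\whG+\ca_{-1}\whH))^{-1}$. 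To justify this one uses that $\ca = \ca_{-1}+\ca_0+\ca_1$ is doubly stochastic with stationary vector $\valpha=\vone/(2\ell)$, so that the drift $\valpha\tr(\ca_1-\ca_{-1})\vone = (p-q)/2<0$; this ensures the bi-infinite chain drifts to $-\infty$, so that $\whG$ is stochastic, $\whH$ is strictly sub-stochastic, and $\whW$ is finite.

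The third step substitutes Theorem~\ref{t:asymptorig}. Combining $\vgamma_k\tr \sim (\vgamma_0\tr\whv)\,\whu\tr\,\rho^{2k}$ with the boundary identity $\vgamma_0\tr = (1/(2\ell))\vone\tr(I-\whR)$ (established in the proof of Theorem~\ref{t:asymptorig}) yields $\vgamma_0\tr\whv = (1-\rho^2)(\whv\tr\vone)/(2\ell)$. Plugging this into
\[
\widehat{\lambda}_k = \vgamma_k\tr\,(I-(\ca_0+\ca_1\whG+\ca_{-1}\whH))\,\vone
\]
and using $\P[\whM_T<k] = \P[\widehat{\tau}_k > T] \approx \exp(-\widehat{\lambda}_k T)$ produces (\ref{e:whM}) with $\whchi_\ell(p)$ of the stated form.

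No fundamentally new difficulty arises beyond the proof of Lemma~\ref{t:lambda}; the only point needing care is the bookkeeping of the factor $1/(2\ell)$. In the lumped model it came from time rescaling, since each step of $\{Z_t\}$ represents $2\ell$ units of real time. Here the detailed chain already runs on the real time scale and no rescaling is applied, but the same $1/(2\ell)$ resurfaces — this time inherited from the boundary expression for $\vgamma_0$ via the doubly-stochastic structure of $\ca$. Tracking this factor carefully is the only non-mechanical part of the argument.
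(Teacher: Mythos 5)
Your proof is correct and is exactly the argument the paper has in mind: the paper omits this proof, describing it as ``similar to that of Lemma~\ref{t:lambda},'' and you have simply carried out that parallel, replacing $(A_{-1},A_0,A_1,W,G,H)$ by $(\ca_{-1},\ca_0,\ca_1,\whW,\whG,\whH)$ and using Theorem~\ref{t:asymptorig} in place of (\ref{e:piklim}). Your closing observation that the factor $1/(2\ell)$ now originates from $\vgamma_0\tr\whv=(1-\rho^2)(\whv\tr\vone)/(2\ell)$ via $\valpha=\vone/(2\ell)$, rather than from a time rescaling, is a correct and useful clarification that the paper leaves implicit.
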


\begin{figure}[tb]
\centering
\includegraphics[scale=0.5]{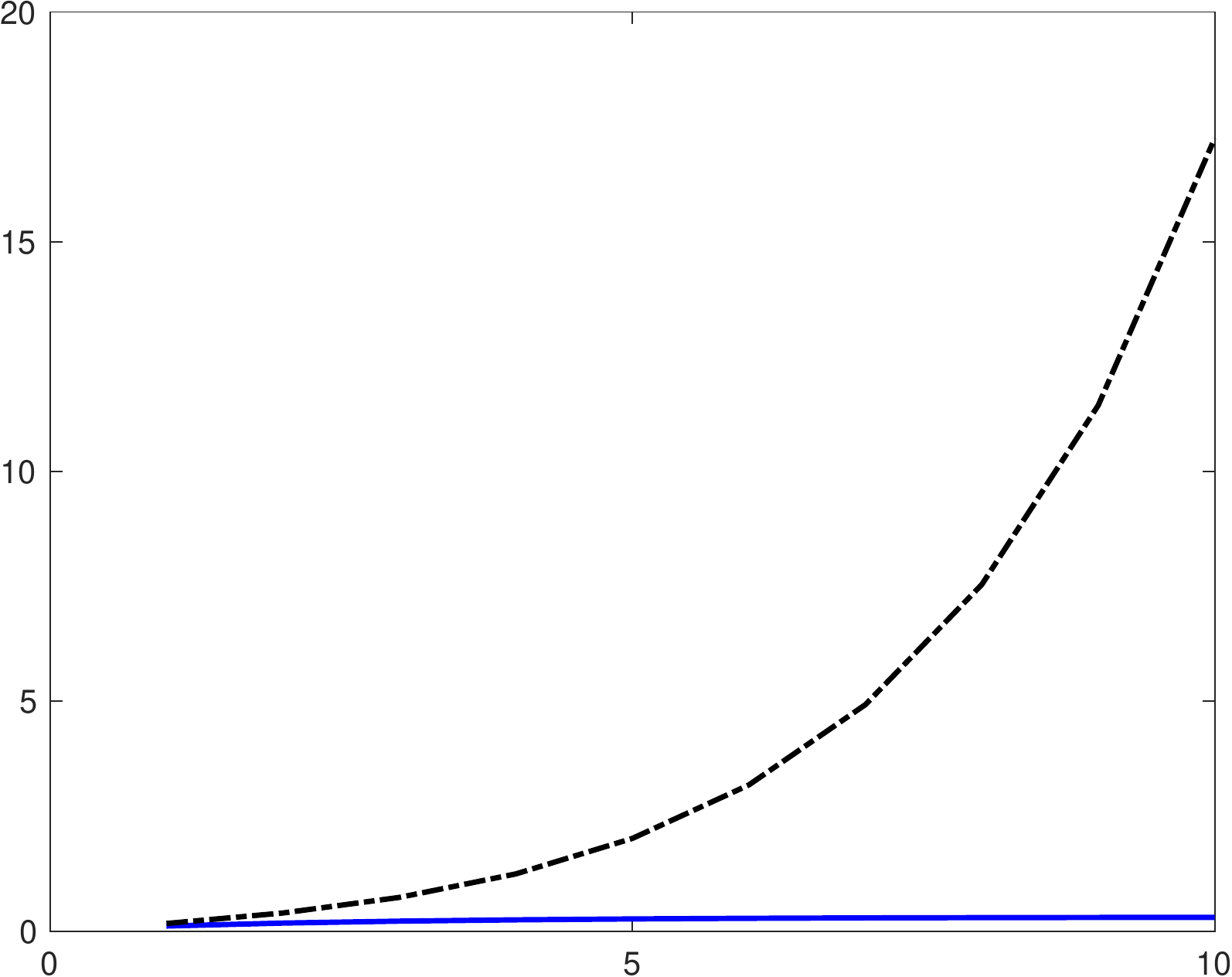} 
 \caption{
  \label{f:lambda40}
Values of $\chi_\ell(p)$ (continuous blue line), and
$\whchi_\ell(p)$ (dashed black line), for $p=0.4$ and $\ell
= 1$ to 10.}
\end{figure}

%\begin{rem} \em
A comparison of $\chi_\ell(p)$ and $\whchi_\ell(p)$ from
Lemmas~\ref{t:lambda} and \ref{t:whM} show that they are very
different.  As an illustration, we give their values on Figure
\ref{f:lambda40} for $p=0.4$ and $\ell = 1$ to 10.
 However, we have noticed that there appear to be a
  systematic ratio between the two: in all our numerical investigation we
  have observed that 
\[
| \frac{\chi_\ell(p) }{ \whchi_\ell(p) \rho^\ell} - 1| < 2 \cdot 10^{-15}.
\]
This leads us to conjecture that  
$\chi_\ell(p)= \whchi_\ell(p) \rho^\ell$.  Under this conjecture, we
re-write (\ref{e:explambda}) as
\[
\P[M_T < k]  \approx \exp(- \frac{\whchi_\ell(p)}{2 \ell}T \rho^{2k+ \ell})
\]
which, together with (\ref{e:whM}), indicates that 
\begin{equation}
   \label{e:diff}
M_T = \whM_T - \ell/2
\end{equation}
asymptotically for large $T$.  % and large values of $M$.  

It is
physically obvious that $\whM_T$ being the maximum taken over all
times $t \leq T$ while $M_T$ being the maximum taken at the end of cycles
only, we should have $M_T \leq \whM_T$.  The specific difference
indicated in (\ref{e:diff}) is interesting but a formal proof eludes
us so far.
%\end{rem}

\appendix

%====================================
\section{Exact analysis:\ $\ell=2$}
   \label{s:exact2}

We assume that $\varphi_0=1$ and we consider the process $\{Z_t\}$
with transition matrix

% We separate the walk into four sub-walks: $i\equiv0\operatorname{mod}4$,
% $i\equiv1\operatorname{mod}4$, $i\equiv2\operatorname{mod}4$ and
% $i\equiv3\operatorname{mod}4$. \ Let%
% \[%
% \begin{array}
% [c]{ccc}%
% U=\left(
% \begin{array}
% [c]{cccccc}%
% q & p & 0 & 0 & 0 & \cdots\\
% 0 & q & p & 0 & 0 & \cdots\\
% 0 & 0 & q & p & 0 & \cdots\\
% 0 & 0 & 0 & q & p & \cdots\\
% 0 & 0 & 0 & 0 & q & \cdots\\
% \vdots & \vdots & \vdots & \vdots & \vdots & \ddots
% \end{array}
% \right)  , &  & \mathbb{P}\left\{  \left.  j+1\right\vert j\right\}  =p
% \end{array}
% \]
% denote the (infinite) transition matrix from $0$ to $1$, and%
% \[%
% \begin{array}
% [c]{ccc}%
% V=\left(
% \begin{array}
% [c]{cccccc}%
% 1 & 0 & 0 & 0 & 0 & \cdots\\
% q & p & 0 & 0 & 0 & \cdots\\
% 0 & q & p & 0 & 0 & \cdots\\
% 0 & 0 & q & p & 0 & \cdots\\
% 0 & 0 & 0 & q & p & \cdots\\
% \vdots & \vdots & \vdots & \vdots & \vdots & \ddots
% \end{array}
% \right)  , &  & \mathbb{P}\left\{  \left.  j-1\right\vert j\right\}  =q
% \end{array}
% \]
% denote the transition matrix from $1$ to $0$. \ The sub-walk for $i\equiv
% 0\operatorname{mod}4$ has transition matrix
\[
P_2= 
U^{2}V^{2}=\left(
\begin{array}
[c]{cccccc}%
\left(  1+2p+3p^{2}\right)  q^{2} & 4p^{3}q & p^{4} & 0 & 0 & \cdots\\
(1+3p)q^{3} & 6p^{2}q^{2} & 4p^{3}q & p^{4} & 0 & \cdots\\
q^{4} & 4pq^{3} & 6p^{2}q^{2} & 4p^{3}q & p^{4} & \cdots\\
0 & q^{4} & 4pq^{3} & 6p^{2}q^{2} & 4p^{3}q & \cdots\\
0 & 0 & q^{4} & 4pq^{3} & 6p^{2}q^{2} & \cdots\\
\vdots & \vdots & \vdots & \vdots & \vdots & \ddots
\end{array}
\right).
\]
The equations for the stationary distribution are as follows:
\begin{align}
   \label{e:zero}
\pi_{0} & =\left(  1+2p+3p^{2}\right)  q^{2}\pi_{0}+(1+3p)q^{3}\pi_{1}+q^{4}%
\pi_{2}, \\
   \label{e:un}
\pi_{1} & =4p^{3}q\pi_{0}+6p^{2}q^{2}\pi_{1}+4pq^{3}\pi_{2}+q^{4}\pi_{3},
  \\
   \label{e:deux}
\pi_{j} & =p^{4}\pi_{j-2}+4p^{3}q\pi_{j-1}+6p^{2}q^{2}\pi_{j}+4pq^{3}\pi
_{j+1}+q^{4}\pi_{j+2}
\qquad \mbox{for $j\geq2$.}
\end{align}
The generating function $F(z)    =
%TCIMACRO{\dsum \limits_{j=2}^{\infty}}%
%BeginExpansion
{\displaystyle\sum\limits_{j=2}^{\infty}}
%EndExpansion
\pi_{j}z^{j}$
may be expressed as
\begin{align*}
F(z)  
  = {} & p^{4}z^{2}%
%TCIMACRO{\dsum \limits_{j=2}^{\infty}}%
%BeginExpansion
{\displaystyle\sum\limits_{j=2}^{\infty}}
%EndExpansion
\pi_{j-2}z^{j-2}+4p^{3}qz%
%TCIMACRO{\dsum \limits_{j=2}^{\infty}}%
%BeginExpansion
{\displaystyle\sum\limits_{j=2}^{\infty}}
%EndExpansion
\pi_{j-1}z^{j-1}+6p^{2}q^{2}%
%TCIMACRO{\dsum \limits_{j=2}^{\infty}}%
%BeginExpansion
{\displaystyle\sum\limits_{j=2}^{\infty}}
%EndExpansion
\pi_{j}z^{j}\\
&  +\frac{4pq^{3}}{z}%
%TCIMACRO{\dsum \limits_{j=2}^{\infty}}%
%BeginExpansion
{\displaystyle\sum\limits_{j=2}^{\infty}}
%EndExpansion
\pi_{j+1}z^{j+1}+\frac{q^{4}}{z^{2}}%
%TCIMACRO{\dsum \limits_{j=2}^{\infty}}%
%BeginExpansion
{\displaystyle\sum\limits_{j=2}^{\infty}}
%EndExpansion
\pi_{j+2}z^{j+2}\\
  = {} &p^{4}z^{2}\left[  F(z)+\pi_{0}+\pi_{1}z\right]  +4p^{3}qz\left[
F(z)+\pi_{1}z\right]  +6p^{2}q^{2}F(z)\\
&  +\frac{4pq^{3}}{z}\left[  F(z)-\pi_{2}z^{2}\right]  +\frac{q^{4}}{z^{2}%
}\left[  F(z)-\pi_{2}z^{2}-\pi_{3}z^{3}\right]
\end{align*}
and we conclude that
\begin{align*}
&  \left[  q^{4}+4pq^{3}z-\left(  1-6p^{2}q^{2}\right)  z^{2}+4p^{3}%
qz^{3}+p^{4}z^{4}\right]  F(z)\\
& \qquad =-p^{4}z^{4}\left(  \pi_{0}+\pi_{1}z\right)  -4p^{3}qz^{3}\left(  \pi
_{1}z\right)  +4pq^{3}z\left(  \pi_{2}z^{2}\right)  +q^{4}\left(  \pi_{2}%
z^{2}+\pi_{3}z^{3}\right)  .
\end{align*}
Replacing $\pi_{2}$ and $\pi_{3}$ by expressions in $\pi_{0}$ and
$\pi_{1}$ from (\ref{e:zero}, \ref{e:un}),
then cancelling the common factor $1-z$ between numerator and denominator,
yields
\[
F(z)=\frac{\left\{  p^{3}(4-3p+pz)\pi_{0}+\left[  -1+6p^{2}-8p^{3}%
+3p^{4}+(4-3p)p^{3}z+p^{4}z^{2}\right]  \pi_{1}\right\}  z^{2}}{(q^{2}%
-p^{2}z)\left[  q^{2}+(1+2pq)z+p^{2}z^{2}\right]  }%
\]
hence%
\[
L=\lim_{z\rightarrow1}F(z)=\frac{2p^{3}(1+q)\pi_{0}-\left[  2(q-p)-q^{4}%
\right]  \pi_{1}}{2(q-p)}.
\]
We observe three zeroes in the denominator $D(z)$ of $F(z)$. \ The first zero,
of smallest modulus $<1$, is negative and given by%
\[
z_{1}=\frac{-1-2pq+\theta}{2p^{2}}%
\]
where $\theta=\sqrt{1+4pq}$. \ The second zero, of intermediate modulus, is
positive and given by%
\[
z_{2}=\frac{q^{2}}{p^{2}} = (\frac{1}{\rho})^2>1.
\]
The third zero, of largest modulus $>1$, is negative and given by%
\[
z_{3}=\frac{-1-2pq-\theta}{2p^{2}}.
\]
Finding the unknowns $\pi_{0}$ and $\pi_{1}$ is achieved by solving two
simultaneous equations:%
{
\[
N(z_1) =0,
\]
found by substituting $z_{1}$ for $z$ in the numerator $N(z)$ for
$F(z)$ and setting this equal to zero, and
\[
\pi_{0}+\pi_{1}+L=1
\]
}
which yields%
\[
\pi_{0}=\frac{(q-p)\left(  3-2p-\theta\right)  }{2q^{4}},
\]%
\[
\pi_{1}=\frac{(q-p)\left[  -1-p-2pq+(1+p)\theta\right]  }{q^{5}}.
\]
Thus we have a complete description of the stationary distribution. \ An exact
expression for $\pi_{j}$ is infeasible; therefore asymptotics as
$j\rightarrow\infty$ are necessary. \ The second zero $z_{2}$ leads, by
classical singularity analysis \cite{FS-heu} to%
\[
A(p)=-\frac{N(z_{2})}{z_{2}D^{\prime}(z_{2})}=\frac{(q-p)\left[
1+(q-p)\theta\right]  }{4q^{4}},
\]%
\[
\pi_{j}\sim A(p)\left(  \dfrac{p^{2}}{q^{2}}\right)  ^{j}
\]
and this confirms (\ref{e:piklim}).
This is the expression that we shall employ in the clumping heuristic. \ 

Consider the random walk on the integers with transition matrix
$\widetilde P_2$ (see Equation (\ref{e:ptilde})).  For any $i$, the
random walk jumps to $i+j$, $-2 \leq j \leq 2$ with probability $p_j$
defined in (\ref{e:pj}).

For nonzero $j$, let $\nu_{j}$ denote the probability that, starting from
$-j$, the walker eventually hits $0$.  For $j=0$, $\nu_{0}$ is the probability
that, starting from $0$, the walker eventually returns to $0$.
We have  
\begin{align}
   \label{e:nu0}
\nu_{0}
  &=p^{4}\nu_{-2}+4p^{3}q\nu_{-1}+6p^{2}q^{2}+4pq^{3}\nu_{1}+q^{4}\nu_{2},
\\
   \label{e:nu}
\nu_{j} & =p^{4}\nu_{j-2}+4p^{3}q\nu_{j-1}+6p^{2}q^{2}\nu_{j}+4pq^{3}\nu
_{j+1}+q^{4}\nu_{j+2} \quad j\geq 1,
\end{align}
by a simple argument 
(note that $\nu_0$ is replaced by 1 in (\ref{e:nu}))
and
the generating function 
$\tilde{F}(z)   =
\sum_{j=1}^{\infty}
\nu_{j}z^{j}
$
is expressed as
\begin{align*}
\tilde{F}(z) 
  = {} &p^{4}z^{2}%
%TCIMACRO{\dsum \limits_{j=1}^{\infty}}%
%BeginExpansion
{\displaystyle\sum\limits_{j=1}^{\infty}}
%EndExpansion
\nu_{j-2}z^{j-2}+4p^{3}qz%
%TCIMACRO{\dsum \limits_{j=1}^{\infty}}%
%BeginExpansion
{\displaystyle\sum\limits_{j=1}^{\infty}}
%EndExpansion
\nu_{j-1}z^{j-1}+6p^{2}q^{2}%
%TCIMACRO{\dsum \limits_{j=1}^{\infty}}%
%BeginExpansion
{\displaystyle\sum\limits_{j=1}^{\infty}}
%EndExpansion
\nu_{j}z^{j}\\
&  +\frac{4pq^{3}}{z}%
%TCIMACRO{\dsum \limits_{j=1}^{\infty}}%
%BeginExpansion
{\displaystyle\sum\limits_{j=1}^{\infty}}
%EndExpansion
\nu_{j+1}z^{j+1}+\frac{q^{4}}{z^{2}}%
%TCIMACRO{\dsum \limits_{j=1}^{\infty}}%
%BeginExpansion
{\displaystyle\sum\limits_{j=1}^{\infty}}
%EndExpansion
\nu_{j+2}z^{j+2}  \qquad \qquad \mbox{by (\ref{e:nu}),}\\
  = {} &p^{4}z^{2}\left[  \tilde{F}(z)+\nu_{-1}z^{-1}+1\right]  +4p^{3}qz\left[
\tilde{F}(z)+1\right]  +6p^{2}q^{2}\tilde{F}(z)\\
&  +\frac{4pq^{3}}{z}\left[  \tilde{F}(z)-\nu_{1}z\right]  +\frac{q^{4}}%
{z^{2}}\left[  \tilde{F}(z)-\nu_{1}z-\nu_{2}z^{2}\right].
\end{align*}
Equivalently,
\begin{align}
   \nonumber 
  (1-z) & (q^{2}-p^{2}z)\left[  q^{2}+(1+2pq)z+p^{2}z^{2}\right]
          \tilde{F}(z)\\
   \nonumber 
  = {} & -p^{4}z^{3}\nu_{-1}-p^{4}z^{4}-4p^{3}qz^{3}+4pq^{3}z^{2}\nu_{1}+q^{4}%
z\nu_{1}\\
   \nonumber 
&  +z^{2}\left(  \nu_{0}-p^{4}\nu_{-2}-4p^{3}q\nu_{-1}-6p^{2}q^{2}-4pq^{3}%
\nu_{1}\right)  \\
   \label{e:num}
  = {} &z^{2}\nu_{0}+q^{4}z\nu_{1}-p^{4}z^{3}\nu_{-1}-4p^{3}qz^{2}\nu_{-1}%
-p^{4}z^{2}\nu_{-2}-6p^{2}q^{2}z^{2}-4p^{3}qz^{3}-p^{4}z^{4}.
\end{align}
Only the first two of the four zeroes $z_{1}$, $1$, $z_{2}$, $z_{3}$
are of interest. \ Let $\tilde{N}(z)$ denote the numerator for
$\tilde{F}(z)$, {that is, $\tilde{N}(z)$ is the expression on the
  right-hand side of (\ref{e:num}).  We have
\begin{equation}
   \label{e:eq1}
\tilde{N}(z_1)  =0,
\qquad 
\tilde{N}(1)  =0.
\end{equation}
}
Using%
\[%
\begin{array}
[c]{ccc}%
\nu_{-j}=p^{4}\nu_{-j-2}+4p^{3}q\nu_{-j-1}+6p^{2}q^{2}\nu_{-j}+4pq^{3}%
\nu_{-j+1}+q^{4}\nu_{-j+2}, &  & j\geq1;
\end{array}
\]
we deduce that%
\[
\nu_{-j}=\nu_{j}\left(  \frac{q^{2}}{p^{2}}\right)  ^{j}%
\]
since multiplying both sides of%
\begin{align*}
\nu_{j}\left(  \frac{q^{2}}{p^{2}}\right)  ^{j}  & =p^{4}\nu_{j+2}\left(
\frac{q^{2}}{p^{2}}\right)  ^{j+2}+4p^{3}q\nu_{j+1}\left(  \frac{q^{2}}{p^{2}%
}\right)  ^{j+1}+6p^{2}q^{2}\nu_{j}\left(  \frac{q^{2}}{p^{2}}\right)  ^{j}\\
& +4pq^{3}\nu_{j-1}\left(  \frac{q^{2}}{p^{2}}\right)  ^{j-1}+q^{4}\nu
_{j-2}\left(  \frac{q^{2}}{p^{2}}\right)  ^{j-2}%
\end{align*}
by $p^{2j}/q^{2j}$ gives an identity. \ Replacing $q^{4}\nu_{2}$ by $p^{4}%
\nu_{-2}$ in our Equation (\ref{e:nu0}) for $\nu_{0}$ gives%
\begin{equation}
   \label{e:eq3}
\nu_{0}=2p^{4}\nu_{-2}+4p^{3}q\nu_{-1}+6p^{2}q^{2}+4pq^{3}%
\nu_{1}.
\end{equation}
Also, replacing $q^{2}\nu_{1}$ by $p^{2}\nu_{-1}$ {in 
  Equations (\ref{e:eq1}, \ref{e:eq3})} reduces the number of variables to
three. \ The simultaneous solution is
\[
\nu_{0}=\frac{-1+2p+8p^{2}-8p^{3}+(q-p)^{2}\theta}{4pq},
\]%
\[
\nu_{-1}=\frac{1-8p^{2}+16p^{3}-8p^{4}-(q-p)\theta}{8p^{3}q},
\]%
\[
\nu_{-2}=\frac{-1-2p+12p^{2}-24p^{4}+24p^{5}-8p^{6}+(q-p)(1+2p-4p^{2})\theta
}{8p^{5}q}%
\]
yielding%
\[
\nu_{1}=\frac{1-8p^{2}+16p^{3}-8p^{4}-(q-p)\theta}{8pq^{3}}%
\]
in particular. \ 
{The root $z_2$ may be used to obtain the asymptotics of $\nu_j$
for $j$ large.}

Readers might be tempted to use the level $k$ as the
absorbing set $S$. But the maximum could be above $k$ without ever touching
level $k$ because of the transition $p^{4}$. So we must use as $S$ the levels
$k$ and $k+1$: no maximum can be above $k+1$ without touching at least one of
the levels $k$ or $k+1$. In the revised notation, this leads to the absorbing
set $\Omega=\{0,-1\}$.

An idea of Aldous \cite{Ald-heu} now comes crucially into play. \ The rate
$\lambda$ of clumps of visits to $\Omega$ is equal to $\lambda_{0}%
+\lambda_{-1}$ where parameters $\lambda_{0}$ and $\lambda_{-1}$ are solutions
of the system%
\[
\lambda_{0}+\lambda_{-1}\nu_{-1}=\left(  1-\nu_{0}\right)  \pi_{j},
\]%
\[
\lambda_{0}\nu_{1}+\lambda_{-1}=\left(  1-\nu_{0}\right)  \pi_{j+1}\sim
\frac{p^{2}}{q^{2}}\left(  1-\nu_{0}\right)  \pi_{j}.
\]
In words, for nonzero $j$, the ratio $\nu_{j}/\left(  1-\nu_{0}\right)  $ is
the expected sojourn time in $\{0\}$, given that the walk started at $-j$.
\ The total clump rate is consequently%
\[
\lambda\sim\frac{(q-p)\left[  1+(q-p)\theta\right]  }{2q^{2}}\pi_{j}%
\]
in association with the transition matrix $U^{2}V^{2}$, that is, the sub-walk
for $\varphi_0=1$.

The total clump rate for $\varphi_0=3$, that is, the transition matrix
$V^{2}U^{2}$,
can similarly be shown to be%
\[
\lambda^{\prime}\sim\frac{(q-p)\left[  1+(q-p)\theta\right]  }{2p^{2}}\pi_{j}%
\]
and this particular sub-walk clearly contains the full walk maximum. \ Of
course, the four sub-walk maxima are not independent.

Note, if $j=\log_{q^{2}/p^{2}}(n)+h+1$, we have%
\[
\left(  \dfrac{q^{2}}{p^{2}}\right)  ^{j}=n\left(  \dfrac{q^{2}}{p^{2}%
}\right)  ^{h+1}%
\]
thus%
\[
\pi_{j}\frac{n}{4}\sim\frac{A(p)}{4}\left(  \dfrac{p^{2}}{q^{2}}\right)
^{j}n=\frac{A(p)}{4}\left(  \dfrac{q^{2}}{p^{2}}\right)  ^{-(h+1)}%
=\frac{(q-p)\left[  1+(q-p)\theta\right]  }{16q^{4}}\,\frac{p^{2}}{q^{2}%
}\left(  \dfrac{q^{2}}{p^{2}}\right)  ^{-h}.
\]
By the clumping heuristic, the desired exponential argument is%
\[
\frac{\lambda^{\prime}}{\pi_{j}}\cdot\pi_{j}\frac{n}{4}\sim\frac
{(q-p)^{2}\left[  1+(q-p)\theta\right]  ^{2}}{32q^{6}}\left(  \dfrac{q^{2}%
}{p^{2}}\right)  ^{-h}=\frac{\chi_{2}(p)}{4}\left(  \dfrac{q^{2}}{p^{2}%
}\right)  ^{-h}%
\]
as was to be shown.

%====================================
\section{Explicit forms for $G$ and $H$}
\label{s:GH}

We define $g_j = G_{1,\ell-j+1}$, $1 \leq j \leq \ell$ that is, $g_j$ is the
probability of visiting $n+1- j$, starting from $n + 1$,
before any other state with index $s \leq n$, and one proves in
\cite[Section 5.5]{blm05} that 
% \[
% G = \vligne{
% 0 & 1 & 0 & \ldots & 0 \\
% \vdots & \ddots & \ddots & \ddots & \vdots \\
% \vdots &  & \ddots & \ddots & 0 \\
% 0 & \ldots & \ldots & 0 & 1 \\
% g_\ell & g_{\ell-1} & \ldots & \ldots & g_1}^\ell
% \]
\[
G = \vligne{
0 & 1 \\
& \ddots & \ddots \\
 &  & 0 & 1 \\
g_\ell & \ldots  & g_2 & g_1}^\ell
\]
or $G=C(\vg)^\ell$ with $\vg =
\vligne{g_\ell & \ldots g_1}\tr$.  Furthermore, the eigenvalues of
$C(\vg)$ being the roots of the polynomial $g(z)= z^\ell - \sum_{\leq i \leq \ell - 1}
g_{\ell - i} z^i$, we have $g(z)= \prod_{1 \leq i \leq \ell}
(z-\sigma_i)$.

We obtain the expression for $H$ by a similar argument.  We define
$h_j = H_{\ell j}$, $1 \leq j \leq \ell$, that is, $h_j$ is the
probability of visiting $n+\ell+j$, starting from $n+\ell$, before any other
state with index $s \geq n+\ell+1$, independently of $n$.  For $i \leq
\ell-1$, we decompose the paths from $n+i$ to $n+\ell+j$ in two sets,
depending on whether they go through $n+\ell$ or not, and obtain
\[
H_{ij} = H_{i+1,j+1} + H_{i+1,1} h_j, \qquad 1 \leq i \leq \ell -1,
\]
and so conclude that $H = \widehat C(\vh)^\ell $, with $\vh =
\vligne{h_1 & h_2 & \ldots & h_\ell }\tr$, where the matrix $\widehat C$
is defined as 
\[
\widehat C(\vx) = 
\vligne{x_0 & x_1 & \ldots & x_{\ell-1} \\
1 & 0 \\
 & \ddots & \ddots \\
& & 1 & 0
}
\]
for $\vx = \vligne{x_0 & \ldots & x_{\ell-1}}\tr$.  

The eigenvalues of $C(\vh)$ are the roots of the polynomial $z^\ell -
\sum_{0 \leq i \leq \ell -1} h_{\ell-i} z^i$, on the one hand, and the
eigenvalues of $H$ and $R$ are the same (by \cite[Theorem
3.20]{blm05}) on the other hand, from which we conclude that 
$h_{\ell-j} = r_j$, for $0 \leq j \leq \ell-1$, and
\[
H =
\vligne{r_{\ell-1} & r_{\ell-2 }& \ldots & r_0 \\
1 & 0 \\
 & \ddots & \ddots \\
& & 1 & 0
}^\ell.
\]

%==================================
\subsection*{Acknowledgments}

Guy Latouche and Beatrice Meini acknowledge the financial support of
GNCS of INdAM, Italy and of ``Fondi di ateneo'' of the University of Pisa.

%\bibliographystyle{abbrv}
%\bibliography{\Home Boulot/Ecrits/Biblio/gnral}

\begin{figure}[tb]
\includegraphics[scale=0.9]{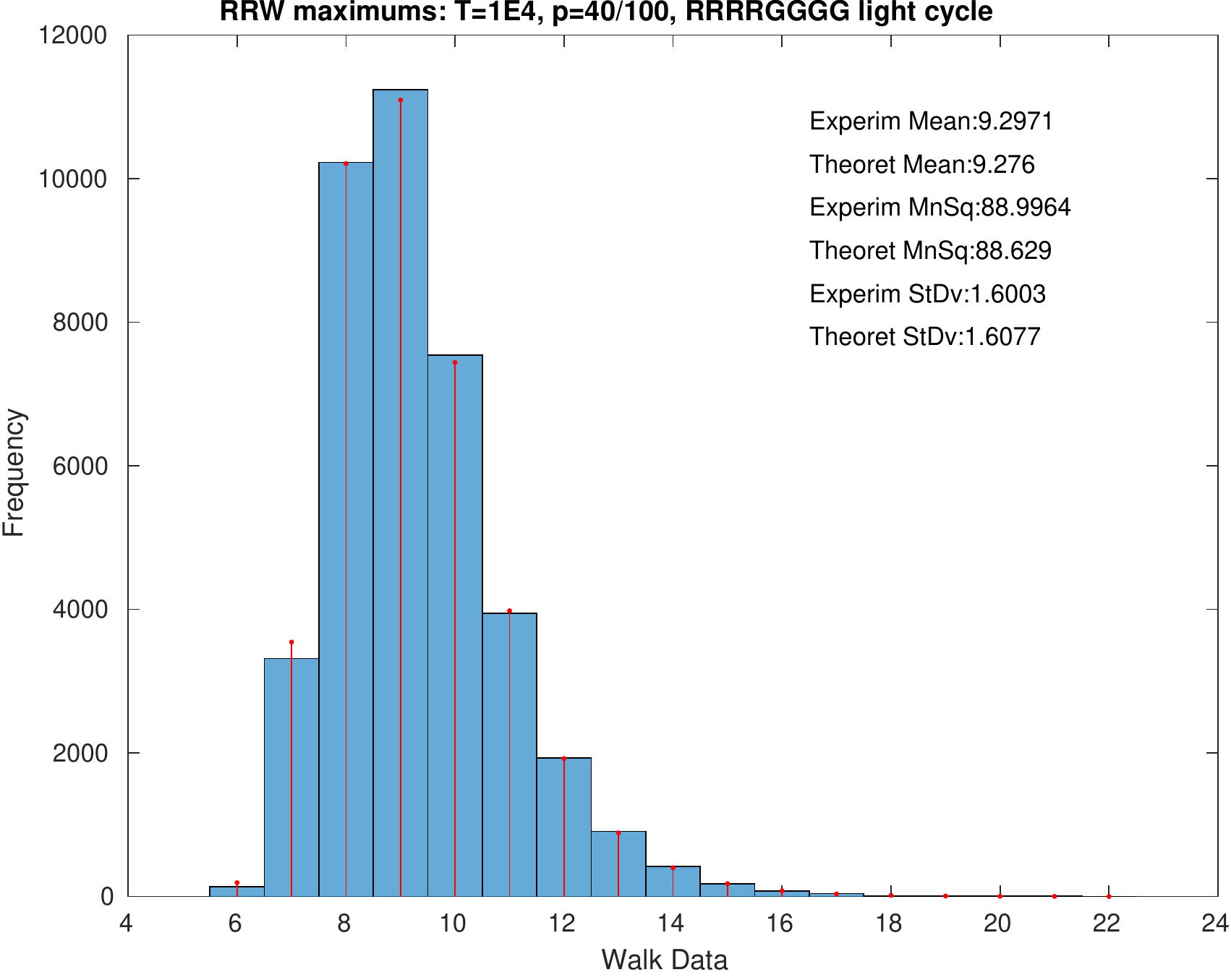} 
\caption{
  \label{f:mt1}
Comparison of the heuristic distribution of $M_T$ and its density
obtained by simulation.  The number of replications is $40,000$, the
parameters are $T= 10^4$, $\ell = 4$ and $p=0.40$.
}
\end{figure}

\begin{figure}[tb]
\includegraphics[scale=0.9]{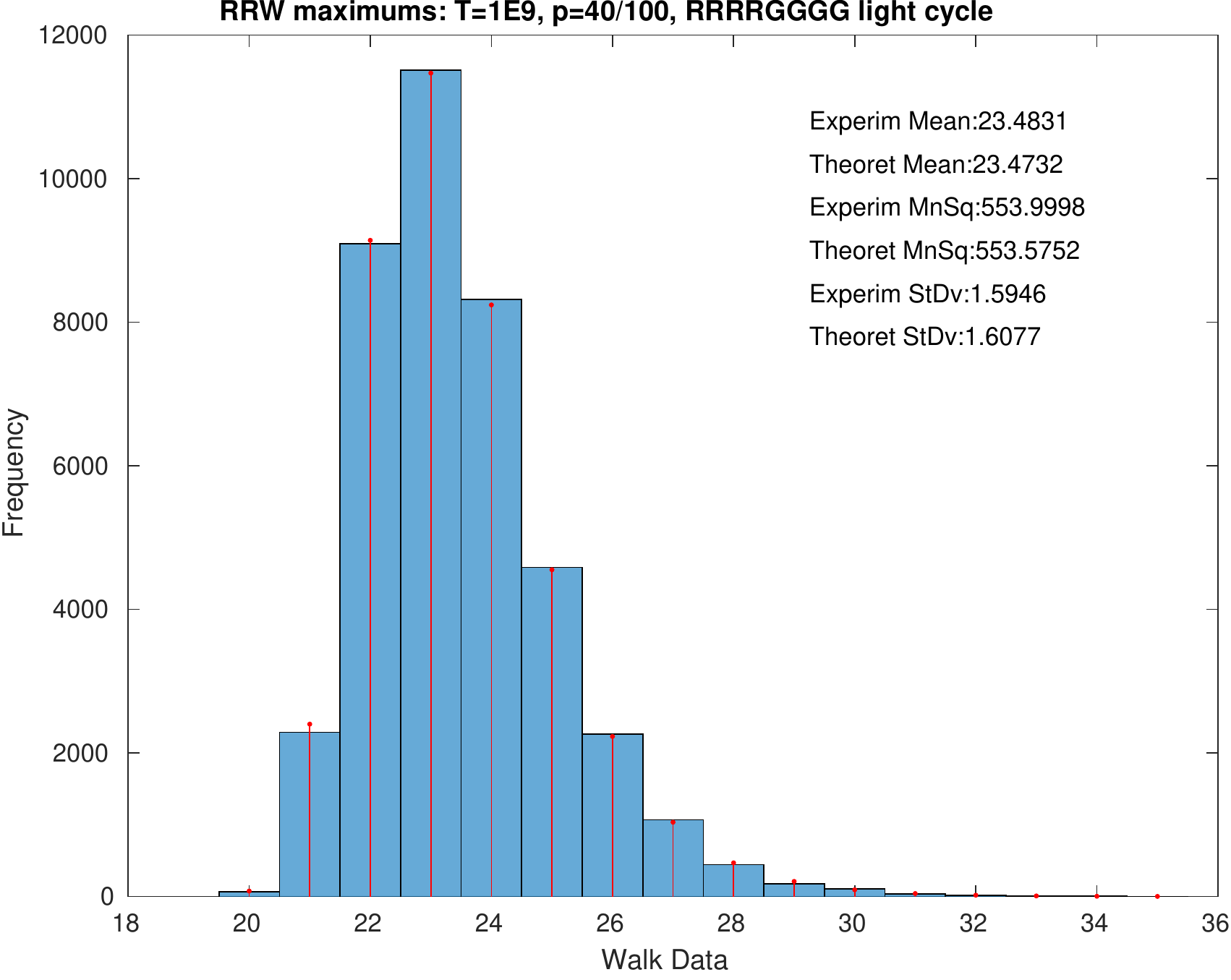} 
\caption{
  \label{f:mt2}
Comparison of the heuristic distribution of $M_T$ and its density
obtained by simulation.  The number of replications is $40,000$, the
parameters are $T= 10^9$, $\ell = 4$ and $p=0.40$.
} 
\end{figure}

\begin{figure}[tb]
\includegraphics[scale=0.9]{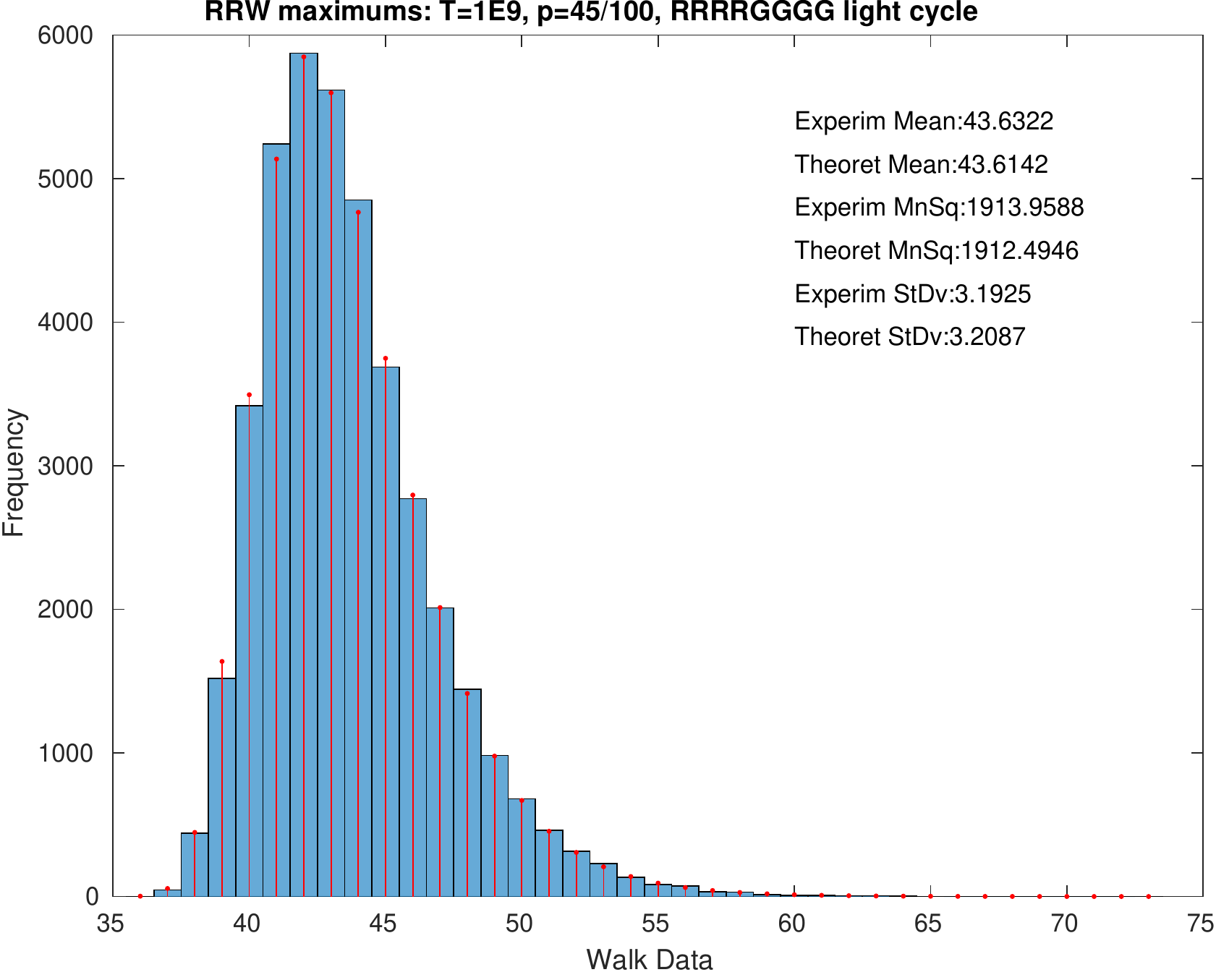} 
\caption{
  \label{f:mt3}
Comparison of the heuristic distribution of $M_T$ and its density
obtained by simulation.  The number of replications is $40,000$, the
parameters are $T= 10^9$, $\ell = 4$ and $p=0.45$.
} 
\end{figure}

\end{document}